\DeclareMathSymbol{\mrq}{\mathord}{operators}{`'}
\newtheorem{theorem}{Theorem}
\newtheorem{lemma}{Lemma}
\newtheorem*{obs*}{Observation}
\newtheorem{definition}{Definition}
\newtheorem{claim}{Claim}
\newcounter{mycount}
\def\0{\mathbf{0}}
\begin{document}
\lstset{language=Python}          

\title{Structure and substructure connectivity of folded divide-and-swap cube}

\author[1]{Muhammed Türkmen \footnote{E-mail:muhammedturkmen831@gmail.com}}
\author[2]{Canan \c{C}ift\c{c}i \footnote{E-mail:cananciftci@odu.edu.tr}}
\author[3] {G\"{u}lnaz Boruzanl{\i} Ekinci \footnote{E-mail:gulnaz.boruzanli@ege.edu.tr}\footnote{Corresponding author} }
\affil[1,2]{Department of Mathematics, Faculty of Arts and Sciences\qquad\qquad\qquad\qquad\qquad Ordu University, Ordu, Türkiye}
\affil[3]{Department of Mathematics, Faculty of Science\qquad\qquad\qquad\qquad\qquad\qquad\qquad\qquad\qquad\qquad Ege University, Izmir, Türkiye}
\date{}

\maketitle
\begin{abstract}
Let $ \mathcal{H} $ be a connected subgraph of a graph $ G $. The structure connectivity of $ G $, denoted by $ \kappa(G;\mathcal{H}) $, is the minimum number of a set of connected subgraphs in $ G $, whose removal disconnects  $ G $ and each element in the set is isomorphic to $ \mathcal{H} $. The substructure connectivity of $ G $, denoted by $ \kappa^s(G;\mathcal{H}) $, is the minimum number of a set of connected subgraphs in $ G $, whose removal disconnects $ G $ and each element in the set is isomorphic to a connected subgraph of $ \mathcal{H} $.  In this paper, we determine  $ \mathcal{H} $-structure connectivity and $ \mathcal{H} $-substructure connectivity of folded divide-and-swap cube $ FDSC_n $ for $ \mathcal{H}\in\{K_1, K_{1,1}, K_{1,m} (2\leq m \leq d+1) \} $ where $ n=2^d $.  We show that $\kappa(FDSC_n;K_1)=\kappa^s(FDSC_n;K_1)=d+2$, $\kappa(FDSC_n;K_{1,1})=\kappa^s(FDSC_n;K_{1,1})=d+1 $ for $ d\geq1 $ and  $\kappa(FDSC_n;K_{1,m})=\kappa^s(FDSC_n;K_{1,m})=\lfloor\frac{d}{2}\rfloor+1$ for $d\geq1 $ and $ 2\leq m \leq d+1$.

\end{abstract}

\bigskip

\noindent{\bf Keywords}: interconnection network, structure connectivity, substructure connectivity, folded divide-and-swap cube

\bigskip

\noindent{\bf MSC:}  05C40, 94C15.

\section{Introduction}

In parallel computing, interconnection networks play a significant role.  An interconnection network can be modeled by a graph $G=(V(G), E(G))$ where $V(G)$ is the vertex set and $E(G)$ is the edge set.  In general, a vertex in $V(G)$ corresponds to a processor, and an edge in $E(G)$ corresponds to a communication link between two processors. The topological properties of interconnection networks have been studied thoroughly in the literature. 

For an interconnection network, it is vital to determine the fault tolerance of the system, since it reflects the resistance of the network against failures. Accordingly, various parameters have been defined and investigated extensively in order to measure the reliability of graphs. Among these, classical connectivity is one of the most important parameters to measure the reliability of the graph since it gives the minimum cost to disconnect it. More precisely, the connectivity of a graph $G$, denoted by $\kappa(G)$, is the minimum cardinality of a vertex set $S\subseteq V(G)$ such that $G-S$ is disconnected or trivial. This notion implicitly assumes that all the neighbors of a vertex may fail simultaneously, which is unlikely for large-scale networks. In 1983, Harary \cite{harary1983conditional} proposed conditional connectivity to overcome this shortcoming. The conditional connectivity, $\kappa(G, \rho) $  is the minimum cardinality of a vertex set $S \subseteq V(G) $, such that $G-S$ is disconnected and every component of it still has the property $\rho$. Motivated by this definition, several variants of this notion have been proposed and investigated extensively in literature \cite{boesch1986synthesis, esfahanian1989generalized, fabrega1994extraconnectivity, guo2023connectivity,hong2013extra, lee2020r3,  lin2015extra,zhou2017g}.  

Prior to 2016, most works on network reliability and fault tolerance focused on individual vertices becoming faulty. This approach implicitly assumes that the status of a vertex $v$ is independent of the status of its neighbors and it disregards the influence of the faulty vertex on its neighborhood. Vertices that are related may, however, affect each other, and those that are neighbors of a faulty vertex are more susceptible to being faulty later on. Since networks are increasingly integrated into chips in today's technology, considering the entire chip as faulty makes sense if any vertex on it becomes faulty. By this motivation, Lin \textit{et al.} \cite{lin2016structure} proposed the structure connectivity and substructure connectivity. Let $\mathcal{H}$ be a connected subgraph of $G$, and let $\mathcal{F}=\{F_1,F_2,\ldots,F_n\}$ be a set of subgraphs of $G$ where $F_i$ in $\mathcal{F}$ is isomorphic to $\mathcal{H}$ for $i\in\{1,2,\ldots,n\}$. Then $\mathcal{F}$ is a  $\mathcal{H}$-structure-cut if $G-\mathcal{F}$ is a disconnected or trivial graph. The minimum cardinality of all $\mathcal{H}$-structure-cuts of $G$ is the $\mathcal{H}$-structure connectivity of $G$, denoted by $\kappa(G;\mathcal{H})$.  Let $H$ be a connected subgraph of $G$, and let $\mathcal{F}=\{F_1,F_2,\ldots,F_n\}$ be a set of subgraphs of $G$ where $F_i$ in $\mathcal{F}$ is isomorphic to a connected subgraph of $\mathcal{H}$ for $i\in\{1,2,\ldots,n\}$.  Then $\mathcal{F}$ is a  $\mathcal{H}$-substructure-cut if $G-\mathcal{F}$ is a disconnected or trivial graph.  The minimum cardinality of all $\mathcal{H}$-substructure-cuts of $G$ is the $\mathcal{H}$-substructure connectivity of $G$, denoted by $\kappa^s(G;\mathcal{H})$. By the definitions, two subgraphs in an $\mathcal{H}$-structure-cut or an $\mathcal{H}$-substructure-cut are not necessarily disjoint.

The hypercube is a well-known interconnection network topology with several desirable properties such as symmetry, simple routing, maximal connectivity, and recursive structure. In literature, various variants of the classical hypercube have been proposed and received considerable attention. These hypercube variants have been investigated in terms of several reliability parameters including the structure and substructure connectivity \cite{ba2023star,li2019structure,lu2020structure,lv2018structure,pan2020structure,sabir2018structure,wang2023structure,zhou2021structureDSC}. 

In \cite{kim2019divide}, Kim \textit{et al.} introduced two novel hypercube variants namely divide-and-swap cube and folded divide-and-swap cube, with several nice hierarchical properties. 

A network's performance and effectiveness can be assessed by its properties such as its diameter, connectivity, fault tolerance, bisection width, broadcasting time, etc. \cite{akers1989group}. Kim \textit{et al.} \cite{kim2019divide} proposed $DSC_n$ and $FDSC_n$ to reduce the network cost defined by the product of degree and diameter. For the classical hypercube and its existing variations, the network cost is ${O}(n^2)$, whereas it is ${O}(n\log{}n)$ for $DSC_n$ and $FDSC_n$.  They also provided many properties and algorithms of these two new classes, including bisection width, Hamiltonicity, routing algorithm, one-to-all and all-to-all broadcasting algorithms. 

The fault tolerance of the divide-and-swap cube has been discussed in several papers. Ning \cite{ning2020connectivity} proved that the (edge) connectivity is equal to $d+1$ and the super (edge) connectivity is equal to $2d$ for $DSC_n$, where $n=2^d$ for $d\geq 1$. The super (edge) connectivity is a variant of (edge) connectivity that gives the minimum number of vertices (resp. edges) that need to be deleted to disconnect the graph without isolating a vertex. Later, Zhou \textit{et al.} \cite{zhou2021structureDSC} investigated the $\mathcal{H}$-structure connectivity and $\mathcal{H}$-substructure connectivity of $DSC_n$ for $ \mathcal{H} \in \{K_1, K_{1,1}, K_{1,m} \text{ } (2\leq m \leq d+1), C_4\}$. Zhou \textit{et al.} \cite{zhou2022reliabilityDSC} studied the $r$-component connectivity and diagnosability of $DSC_n$. Recently, Zhao and Chang \cite{zhao2023reliabilityDSC} determined the generalized $k$-connectivity of  $DSC_n$ for $k \in \{3,4\}$. 

The folded divide-and-swap cube is obtained from the divide-and-swap cube by adding an edge to each vertex to reduce the diameter slightly. In 2021, Chang \textit{et al.} \cite{chang2021constructing} showed that $FDSC_n$ is suitable as a candidate topology for data center networks and they provided a recursive construction of two completely independent spanning trees for $FDSC_n$. Zhao and Chang \cite{zhao2023connectivity} discussed the reliability of $FDSC_n$ and proved that the (edge) connectivity is equal to $d+2$. They also showed that the super connectivity is $2d$ and the super edge connectivity is $2d+2$. They also determined the generalized $3$-connectivity of $FDSC_n$. Recently, You \textit{et al.} \cite{you2023superspanning} investigated the super spanning connectivity of $FDSC_n$ and Xue \textit{et al.} \cite{xue2023gen4conn} provided an upper and a lower bound for the generalized $4$-connectivity of $FDSC_n$. Currently, no further research has been conducted on the reliability of $FDSC_n$.

In this paper, we continue to expand on the study of reliability in folded divide-and-swap cubes by considering structure connectivity and substructure connectivity. To be more precise, this study focuses on $ \mathcal{H} $-structure connectivity and $ \mathcal{H} $-substructure connectivity of  $ FDSC_n $ for $ \mathcal{H}\in\{K_1, K_{1,1}, K_{1,m} (2\leq m \leq d+1) \} $ where $ d\geq1 $ and $ n=2^d $.  

\section{Preliminaries}

In this section, we first introduce the notion used and then give the definition of the folded divide-and-swap cube. We also cite some known lemmas and then state two lemmas required in the proof of the main results. 

Let $G$ be an undirected graph with the vertex set $V(G)$ and the edge set $E(G)$.  For two vertices $u,v\in V(G)$, if $(u,v)\in E(G)$, then $u$ is adjacent to $v$ or $u$ is a neighbor of $v$. The neighborhood of $u \in V(G)$, denoted by $N_G(u)$, is the set of vertices adjacent to $u$ in $G$. The degree of $u$, denoted by $deg_G(u)$, is the cardinality of $N_G(u)$. If $deg_G(u)=r$ for each $u \in V(G)$, then $G$ is called $r$-regular. For any vertex set $S \subseteq V(G)$, let $G[S]$ denote the subgraph induced by $S$. We denote $[m]=\{1,2,\ldots,m\}$ and $N=[2^{\frac{n}{2}}]$ throughout the paper.

For a subgraph $H$ of a graph $G$, we use $G-H$ to denote the subgraph of $G$ induced by $V(G)-V(H)$.  For a set $\mathcal{F}=\{F_1, \dots, F_n\}$, where each $F_i$ is isomorphic to a connected subgraph of $G$, we use $G-\mathcal{F}$ to denote the subgraph of $G$ induced by $V(G)-V(F_1)-\ldots-V(F_n)$.

As a novel hypercube variant, the $n$-dimensional divide-and-swap cube $DSC_n$ was introduced by Kim \textit{et al.} \cite{kim2019divide} as follows. 

\begin{definition}\cite{kim2019divide}	
	For an integer $n=2^d$ and $d\geq1$, the n-dimensional divide-and-swap cube, denoted by $DSC_n$, is a graph with the vertex set $V(DSC_n) = \{0,1\}^n = \{u~\vert~ u=s_1s_2\ldots s_{n-1}s_n\textrm{ and } s_i \in \{0,1\} \textrm{ for } i\in [n]\}$. The label of $u$ can be divided into three parts, denoted as $u=s_1s_2\ldots s_{n-1}s_n=m_1m_2m_3$ such that 
	\[m_1=s_1s_2\ldots s_{\frac{n}{2^k}}, ~ m_2=s_{\frac{n}{2^k}+1}s_{\frac{n}{2^k}+2}\ldots s_{\frac{n}{2^{k-1}}}, ~m_3=s_{\frac{n}{2^{k-1}}+1}s_{\frac{n}{2^{k-1}}+2}\ldots s_n \\ \]
	where $1\leq k \leq log_{2}n=d$.
	If $k=1$, then $m_3$ is an empty string, that is, $m_1=s_1s_2\ldots s_{\frac{n}{2}}$ and $m_2=s_{\frac{n}{2}+1}s_{\frac{n}{2}+2}\ldots s_n$. A vertex $v$ is adjacent to $u$ in $DSC_n$ if it satisfies one of the following conditions:

	\begin{itemize}
		\item [(1)] $v=\overline{s}_1s_2s_3\cdots s_n$, where $\overline{s}_1$ is the complement of $s_1$. This type of edge is called an $e(1)$-edge. 
		\item [(2)] $v=\overline{m}_1\overline{m}_2m_3$ if $m_1=m_2$; and $v=m_2m_1m_3$  otherwise. This type of edge is called an $ e(\frac{2n}{2^k}) $-edge. 		
	\end{itemize}
\end{definition}

The folded hypercube is one of the well-known variants of the classical hypercube. With similar motivation, Kim \textit{et al.} \cite{kim2019divide} proposed the folded divide-and-swap cube obtained by adding an edge to each vertex. 
  
\begin{definition}\cite{kim2019divide}\label{defFDSCn}
	For $n=2^d$ and $d\geq1$, the n-dimensional folded divide-and-swap cube, denoted by $FDSC_n$, is obtained from $DSC_n$ by adding an edge to each vertex as follows: $V(FDSC_n) = V(DSC_n)$ and $E(FDSC_n) = E(DSC_n) \cup E_f$, where 
	$E_f = \{ (u,v) ~\vert~ u=u_1u_2u_3\dots u_n \text{ and } v=u_1\Bar{u}_2u_3\dots u_n\}$ and each $(u,v) \in E_f$ is called an $e(f)$-edge.
\end{definition}

For any vertex $u =u_1u_2u_3\dots u_n$ in $FDSC_n$, we use the notation $u_f$ for the vertex $u_1\Bar{u}_2u_3\dots u_n$. In Figure 1, $FDSC_2$ and $FDSC_4$ are given. 

\begin{figure}[h]
	\centering
	\includegraphics[width=0.8\textwidth]{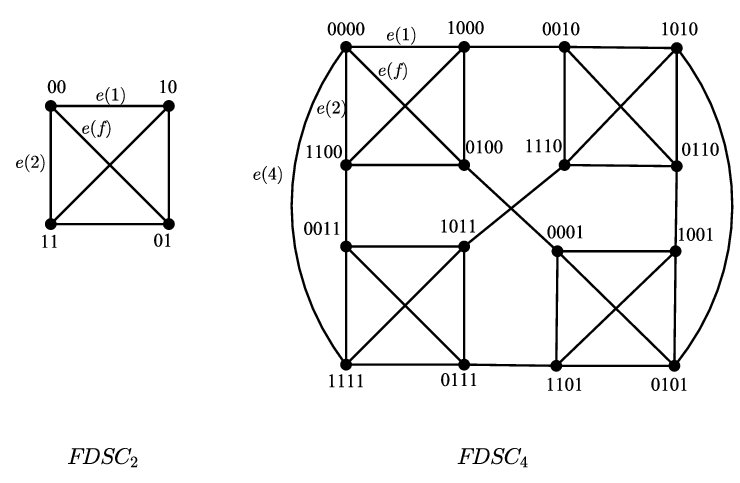}
	\caption{The folded divide-and-swap cubes $FDSC_2$ and $FDSC_4$ }
\end{figure}

Each subgraph $FDSC_{\frac{n}{2}}$ in $FDSC_n$ is referred as a module. The address of an arbitrary vertex  $ u=s_1s_2\ldots s_{\frac{n}{2}}s_{\frac{n}{2}+1}s_{\frac{n}{2}+2} \ldots s_n$ in a module can be represented by $u=A_iB_i$, where $A_i=s_1s_2\ldots s_{\frac{n}{2}}$ is the address of the vertex inside the module and $B_i=s_{\frac{n}{2}+1}s_{\frac{n}{2}+2} \ldots s_n$ is the address of the module. A module with address $B_i$ is denoted by $G_i$. An edge inside a module is called an interior edge, while an edge connecting two distinct modules is called a cross edge. That is, the edges $ e(i) $ for $ i\in\{1,\frac{2n}{2^d}, \frac{2n}{2^{d-1}}, \dots, \frac{2n}{2^4}, \frac{2n}{2^3}, \frac{2n}{2^2}\} $ and $ e(f) $ are interior edges. The edge $ e(n) $ is a cross edge. Throughout the paper,  we call an edge $ (u, v)\in E(FDSC_n) $ fault-free if the end vertices $ u $ and $ v $ are both fault-free.

If $ (u, v) $ is an $ e(1) $-edge, then we refer the vertex $ v $ as the $1$-neighbor of vertex $ u $, and we also denote $ v =u_1 $; if $ (u, v) $ is an $ e(\frac{2n}{2^k}) $-edge for $ 2\leq k\leq d $, then we refer $ v $ as the $ k $-neighbor (interior neighbor) of vertex $ u $, denoted by $ u_k $; if $ (u, v) $ is an $ e(\frac{2n}{2^k}) $-edge where $ k=1 $, then we refer $ v $ as the ($ d+1 $)-neighbor (external neighbor), namely $ v =u_{d+1} $.

\begin{lemma} \cite{kim2019divide} \label{knownproportiesFDSC}
	For any $n=2^d$ and $d\geq 1$, we have the following results:
	\begin{itemize}
		\item [(1)] $FDSC_n$ is ($d+2$)-regular.
		\item [(2)] $|V(FDSC_n)|=2^n$ and $|E(FDSC_n)|=2^{n-1}(d+2)$.	
		\item [(3)] $FDSC_n$ can be decomposed into $2^\frac{n}{2}$ $FDSC_{\frac{n}{2}}$.
		\item [(4)] The minimum length of a cycle in  $FDSC_n$ is $3$.
		\item [(5)]If we represent each module $FDSC_{\frac{n}{2}}$ in $FDSC_n$  with a super vertex, the obtained graph is a complete graph $K_{2^{\frac{n}{2}}}$.
	\end{itemize}
\end{lemma}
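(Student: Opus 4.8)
The plan is to argue everything directly from the bit-string definitions of $DSC_n$ and $FDSC_n$; no external machinery is required, though items (1) and (3) call for some bookkeeping with coordinate indices. For (1), fix a vertex $u=s_1s_2\ldots s_n$: it is incident to exactly one $e(1)$-edge, to exactly one $e(\frac{2n}{2^k})$-edge for each $k\in[d]$ (in each of the cases $m_1=m_2$ and $m_1\ne m_2$ the definition produces a single neighbour, which is $\ne u$), and to exactly one $e(f)$-edge, for a total of $d+2$. One then verifies that these $d+2$ neighbours are pairwise distinct by comparing the sets of changed coordinates: the $e(1)$- and $e(f)$-neighbours change a single coordinate ($s_1$, resp.\ $s_2$), whereas the $e(\frac{2n}{2^k})$-neighbour changes an even number $\ge 2$ of coordinates, all lying in the length-$\frac{n}{2^{k-1}}$ prefix and placed symmetrically about its midpoint $\frac{n}{2^k}$; since the prefix lengths $\frac{n}{2^{k-1}}$ are strictly decreasing in $k$ (and the prefix for $k'>k$ fits inside the first half of the prefix for $k$), these neighbours are distinct for distinct $k$ as well. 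Hence $FDSC_n$ is $(d+2)$-regular, $|V(FDSC_n)|=|\{0,1\}^n|=2^n$ is immediate, and $|E(FDSC_n)|=\frac{1}{2}\cdot 2^n(d+2)=2^{n-1}(d+2)$ by the handshake lemma, giving (2).

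For (3) (with $d\ge 2$, so that $FDSC_{\frac{n}{2}}$ is defined), index the modules $G_i$ by the last $\frac{n}{2}$ coordinates $B_i=s_{\frac{n}{2}+1}\ldots s_n$. Every interior edge — the $e(1)$-edge, the $e(\frac{2n}{2^k})$-edges for $2\le k\le d$ (for which $\frac{n}{2^{k-1}}\le\frac{n}{2}$), and the $e(f)$-edge — changes only coordinates in $\{1,\ldots,\frac{n}{2}\}$, while the unique cross edge $e(n)$ (the case $k=1$) changes coordinates beyond $\frac{n}{2}$. Hence deleting all $e(n)$-edges splits $FDSC_n$ into exactly $2^{\frac{n}{2}}$ pieces, one per module. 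Reading each module off in its first $\frac{n}{2}$ coordinates, the $e(\frac{2n}{2^k})$-edge of $FDSC_n$ (for $2\le k\le d$) becomes the corresponding edge of $FDSC_{\frac{n}{2}}$ under the reindexing $k\mapsto k-1$, while $e(1)\mapsto e(1)$ and $e(f)\mapsto e(f)$; this identifies each module with a copy of $FDSC_{\frac{n}{2}}$, proving (3).

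For (4), since $FDSC_n$ is simple it suffices to exhibit one triangle: take any $u=s_1s_2\ldots s_n$ with $s_1=s_2$; then $u$, its $e(1)$-neighbour $u_1$, and its $e(f)$-neighbour $u_f$ form a triangle, the remaining side $u_1u_f$ being an $e(\frac{2n}{2^d})$-edge (a direct check using $s_1=s_2$), so the girth is $3$. For (5), take a module $G_B$ and, for each $\frac{n}{2}$-bit string $A$, the vertex with in-module address $A$ and module address $B$: its $e(n)$-neighbour lies in module $G_A$ when $A\ne B$ and in module $G_{\bar B}$ when $A=B$. As $A$ ranges over all choices this reaches every module distinct from $G_B$ (using $\bar B\ne B$), so contracting each module to a single vertex yields the complete graph $K_{2^{\frac{n}{2}}}$, proving (5).

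The only genuinely delicate part is the coordinate bookkeeping. In (1) one must verify that the $d+2$ candidate neighbours are pairwise distinct — equivalently, that $DSC_n$ is $(d+1)$-regular and the added $e(f)$-edges are not already edges of $DSC_n$ — and in (3) one must verify that each module is \emph{isomorphic} to $FDSC_{\frac{n}{2}}$ rather than merely containing a subgraph isomorphic to it; the latter hinges on the index shift $k\mapsto k-1$ lining up the edge types across the recursion and on ``the second coordinate'' playing the same distinguished role at every level.
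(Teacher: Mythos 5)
Your proof is correct; note that the paper itself offers no argument for this lemma, importing it from Kim \emph{et al.}\ \cite{kim2019divide} as a known fact, so there is nothing to compare against line by line. Your direct verification from the bit-string definitions is sound at every point that matters: the parity/symmetry argument showing the $d+2$ candidate neighbours are pairwise distinct (the $e(1)$- and $e(f)$-neighbours flip one coordinate, each $e(\frac{2n}{2^k})$-neighbour flips an even, nonzero, midpoint-symmetric set inside a prefix whose outer half is disjoint from the prefixes for larger $k$), the index shift $k\mapsto k-1$ identifying each module with $FDSC_{\frac{n}{2}}$, the triangle $u,u_1,u_f$ when $s_1=s_2$, and the case split $A=B$ versus $A\neq B$ for the $e(n)$-edges, which recovers exactly the adjacency pattern the paper records separately as Lemma \ref{externalneighbors}. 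The only caveat, which you already flag, is that item (3) requires $d\geq 2$ for $FDSC_{\frac{n}{2}}$ to be defined.
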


\begin{lemma}\cite{kim2019divide} \label{externalneighbors}
	Let $B_i$ be the address of each module $G_i$ in $FDSC_n$ for all $i\in N$. If $B_i=\overline{B_j}$ for $i,j \in N$ and $i\neq j$, then two modules $G_i$ and $G_j$ with addresses $B_i$ and $B_j$ are connected by two edges $(B_iB_i,B_jB_j)$ and $(B_jB_i,B_iB_j)$; otherwise, $G_i$ and $G_j$ are connected by one edge $(B_jB_i,B_iB_j)$.
\end{lemma}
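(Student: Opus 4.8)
The plan is to work directly from the adjacency rule of $DSC_n$ specialized to $k=1$, which is exactly the rule governing the cross ($e(n)$) edges. For $k=1$ the string $m_3$ is empty, so writing an arbitrary vertex as $u=AB$ with $A=s_1s_2\ldots s_{n/2}$ its internal address and $B=s_{n/2+1}\ldots s_n$ its module address, condition (2) of the definition of $DSC_n$ says that the unique $e(n)$-edge incident to $u$ joins $u$ to $BA$ when $A\neq B$, and to $\overline{A}\,\overline{B}$ when $A=B$. Since every vertex of the form $XB$ lies in the module whose address is $B$, the endpoint of a cross edge alone determines which module that edge reaches; this is the only structural fact the argument needs.

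First I would verify that two distinct modules $G_i$ and $G_j$ are always joined by the edge $(B_jB_i,B_iB_j)$. Take the vertex $B_jB_i\in V(G_i)$; since $i\neq j$ we have $B_j\neq B_i$, so by the rule above its cross edge joins it to $B_iB_j$, a vertex of $G_j$. Hence $(B_jB_i,B_iB_j)$ is a cross edge between the two modules.

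Next I would enumerate \emph{all} cross edges between $G_i$ and $G_j$ by scanning the vertices of $G_i$, each of which has the form $XB_i$ for some internal address $X$. If $X\neq B_i$, the cross edge at $XB_i$ reaches the module with address $X$, which equals $G_j$ precisely when $X=B_j$, recovering the edge found above. If $X=B_i$, the cross edge at $B_iB_i$ reaches the module with address $\overline{B_i}$, which equals $G_j$ precisely when $B_j=\overline{B_i}$, and in that case this edge is $(B_iB_i,B_jB_j)$. So $G_i$ and $G_j$ are joined by exactly one cross edge unless $B_j=\overline{B_i}$, in which case they are joined by exactly the two edges $(B_jB_i,B_iB_j)$ and $(B_iB_i,B_jB_j)$; these are distinct because $B_i\neq B_j$. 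This is precisely the statement of the lemma.

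The hard part is essentially nonexistent here: the result is a bookkeeping consequence of the definition of $DSC_n$ together with the fact (Lemma \ref{knownproportiesFDSC}(5)) that contracting each module yields $K_{2^{n/2}}$. The only points demanding care are handling the ``complemented'' branch $A=B$ of condition (2) correctly, and making the scan over the vertices $XB_i$ of $G_i$ genuinely exhaustive so that no cross edge between $G_i$ and $G_j$ is overlooked or counted twice.
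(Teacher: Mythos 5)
This lemma is quoted from \cite{kim2019divide} and the paper supplies no proof of its own, so there is nothing to compare against; your direct verification from the $k=1$ branch of condition (2) — checking that the cross edge at $XB_i$ lands in module $X$ when $X\neq B_i$ and in module $\overline{B_i}$ when $X=B_i$, then scanning all $X$ — is correct and complete. Note only that the appeal to Lemma \ref{knownproportiesFDSC}(5) is unnecessary: your exhaustive scan already yields the existence of the edge $(B_jB_i,B_iB_j)$ for every pair $i\neq j$, and that part of Lemma \ref{knownproportiesFDSC} is a consequence of the present lemma rather than an ingredient of it.
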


\begin{lemma}\cite{you2023superspanning} \label{externalneighbors2}
	Let $G_1,G_2,\ldots,G_{2^{\frac{n}{2}}}$ be the $2^{\frac{n}{2}}$ modules of $FDSC_n$
	each of which has $2^{\frac{n}{2}}$ vertices. Let $u=B_iB_i$ in module $ G_i$ with $1\leq i \leq 2^{\frac{n}{2}}$. Then each vertex in $V(G_i)-\{u\}$ is connected to a different module $G_j$ with an $e(n)$-edge where $1\leq i\neq j \leq 2^{\frac{n}{2}}$. Let $v=\overline{B_i}B_i$ in $ G_i$. Then $u $ and $v $ are connected to two distinct vertices from $G_{k}$ through $e(n)$-edges, where the address of $G_k$ is  $\overline{B_i}$ with $i\neq j \neq k$. 
\end{lemma}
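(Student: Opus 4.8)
The plan is to prove the lemma by directly unwinding the rule that produces the $e(n)$-edge, namely the second adjacency condition in the definition of $DSC_n$ specialized to $k=1$ (so that $m_{3}$ is the empty string and a label splits as $u=m_{1}m_{2}$ with $m_{1},m_{2}\in\{0,1\}^{\frac{n}{2}}$). Writing a vertex of module $G_{i}$ as $w=AB_{i}$, where $A$ is its interior address and $B_{i}$ is the module address, that condition says: if $A=B_{i}$ then the $e(n)$-neighbor of $w$ is $\overline{A}\,\overline{B_{i}}$, while if $A\neq B_{i}$ then the $e(n)$-neighbor of $w$ is $B_{i}A$. Hence in the first case the neighbor lies in the module with address $\overline{B_{i}}$, and in the second case it lies in the module with address $A$. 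This single observation already contains the lemma; the rest is bookkeeping.

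First I would apply this to $u=B_{i}B_{i}$: here $A=B_{i}$, so the $e(n)$-neighbor of $u$ is $\overline{B_{i}}\,\overline{B_{i}}$, which lies in the module $G_{k}$ whose address is $\overline{B_{i}}$; note $k\neq i$, since $B_{i}$ and $\overline{B_{i}}$ differ in every coordinate (recall $d\geq 1$). Next, for any vertex $w=AB_{i}\in V(G_{i})\setminus\{u\}$ one has $A\neq B_{i}$, and its $e(n)$-neighbor $B_{i}A$ lies in the module with address $A$. As $w$ ranges over $V(G_{i})\setminus\{u\}$ the string $A$ ranges over all $2^{\frac{n}{2}}-1$ binary strings of length $\frac{n}{2}$ different from $B_{i}$, each exactly once, and there are exactly $2^{\frac{n}{2}}-1$ modules other than $G_{i}$; hence the map sending $w$ to the module containing its $e(n)$-neighbor is a bijection from $V(G_{i})\setminus\{u\}$ onto the set of modules distinct from $G_{i}$. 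In particular every vertex of $V(G_{i})\setminus\{u\}$ is joined by its $e(n)$-edge to a different module $G_{j}$ with $j\neq i$, which is the first assertion of the lemma.

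Finally I would single out $v=\overline{B_{i}}B_{i}$, that is, the vertex of $V(G_{i})\setminus\{u\}$ with interior address $A=\overline{B_{i}}$; by the computation above its $e(n)$-neighbor is $B_{i}\,\overline{B_{i}}$, which again lies in $G_{k}$ (address $\overline{B_{i}}$), the very module already reached from $u$. The two target vertices are $\overline{B_{i}}\,\overline{B_{i}}$ (from $u$) and $B_{i}\,\overline{B_{i}}$ (from $v$), and they are distinct because their first halves $\overline{B_{i}}$ and $B_{i}$ differ. Therefore $u$ and $v$ are connected through $e(n)$-edges to two distinct vertices of $G_{k}$ with $k\neq i$, as claimed.

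I do not expect a genuine obstacle; the only steps requiring care are correctly specializing the adjacency condition to $k=1$ so that the cases $m_{1}=m_{2}$ and $m_{1}\neq m_{2}$ are kept separate, and phrasing the counting step as an explicit bijection so that the claim that each vertex goes to a different module is actually proved rather than merely made plausible. It is also worth noting in passing that this lemma refines Lemma~\ref{externalneighbors}: the doubly connected pair of modules found there is $\{G_{i},G_{k}\}$ with the address of $G_{k}$ equal to $\overline{B_{i}}$, and the two parallel cross edges are exactly those incident to the diagonal vertices $B_{i}B_{i}$ and $\overline{B_{i}}B_{i}$ of $G_{i}$.
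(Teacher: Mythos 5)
Your argument is correct and complete: specializing the $e\!\left(\frac{2n}{2^k}\right)$-adjacency rule to $k=1$, noting that $u=B_iB_i$ maps to $\overline{B_i}\,\overline{B_i}$ while every other $w=AB_i$ maps to $B_iA$ in the module addressed $A$, and observing that $A\mapsto G_A$ is a bijection onto the modules other than $G_i$, is exactly the verification this lemma requires. The paper states this lemma as a cited result from the literature and gives no proof of its own, so there is nothing to compare against; your direct unwinding of the definition (including the final check that $\overline{B_i}\,\overline{B_i}\neq B_i\overline{B_i}$) stands on its own.
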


Let us consider Lemma \ref{externalneighbors}. The vertices $B_jB_j=\overline{B}_i \overline{B}_i$ and $B_iB_j=B_i\overline{B}_i$ are the external neighbors of $B_iB_i$ and $B_jB_i=\overline{B_i}B_i$, respectively. Thus, by Lemma \ref{externalneighbors} and  Lemma \ref{externalneighbors2}, we readily have the following lemma.

\begin{lemma}\label{proportiesFDSC}
	For any module $G_i$ in $FDSC_n$  with address $B_i$ ($i\in N$),  the followings are true.
	\begin{itemize}
		\item [(1)] Each vertex in $G_i$ has exactly one external neighbor in some module $G_j$ for $j\neq i$.
		\item [(2)] Vertices $B_iB_i$ and $\overline{B_i}B_i$ have different external neighbors $\overline{B}_i\overline{B}_i$ and $B_i\overline{B_i}$ respectively in the same module with address $\overline{B_i}$.
		\item [(3)] The external neighbors of any two vertices in $V(G_i)-\{B_iB_i, \overline{B_i}B_i\}$ are in different modules.
		\item [(4)] The number of cross edges between two different modules is either 1 or 2.
	\end{itemize}
\end{lemma}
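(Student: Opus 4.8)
The plan is to obtain all four assertions as direct consequences of Lemma~\ref{externalneighbors} and Lemma~\ref{externalneighbors2}, together with the fact (recorded in the paragraph following Definition~\ref{defFDSCn}) that among the edge types of $FDSC_n$ only an $e(n)$-edge joins two distinct modules, whereas $e(1)$, $e(f)$ and each $e(\frac{2n}{2^k})$ with $2\le k\le d$ stay inside a module. For~(1), note that each vertex $u=A_iB_i$ of $G_i$ is incident with exactly one $e(n)$-edge, hence with exactly one cross edge, so it has exactly one external neighbor $u_{d+1}$. Evaluating the adjacency rule that defines the $e(n)$-edge with $m_1=A_i$ and $m_2=B_i$ gives $u_{d+1}=\overline{B_i}\,\overline{B_i}$ when $A_i=B_i$ and $u_{d+1}=B_iA_i$ when $A_i\ne B_i$; the module address of $u_{d+1}$ is then $\overline{B_i}$ or $A_i$, which in either case differs from $B_i$, so $u_{d+1}$ lies in some $G_j$ with $j\ne i$. (This last point is also immediate from the first sentence of Lemma~\ref{externalneighbors2}.)

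For~(2) I would evaluate the same rule at the two distinguished vertices: for $u=B_iB_i$ one has $m_1=m_2=B_i$, so $u_{d+1}=\overline{B_i}\,\overline{B_i}$, and for $u=\overline{B_i}B_i$ one has $m_1=\overline{B_i}\ne B_i=m_2$, so $u_{d+1}=B_i\overline{B_i}$; both endpoints lie in the module with address $\overline{B_i}$ and are distinct since $\overline{B_i}\ne B_i$, which is precisely what Lemma~\ref{externalneighbors} and the last sentence of Lemma~\ref{externalneighbors2} assert. For~(3), Lemma~\ref{externalneighbors2} already says that the vertices of $V(G_i)\setminus\{B_iB_i\}$ are sent by their $e(n)$-edges into pairwise distinct modules, and removing the additional vertex $\overline{B_i}B_i$ only shrinks this set, so the conclusion persists. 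For~(4), Lemma~\ref{externalneighbors} states that $G_i$ and $G_j$ are joined by two cross edges when $B_i=\overline{B_j}$ and by exactly one cross edge otherwise; since contracting the modules yields $K_{2^{n/2}}$ by Lemma~\ref{knownproportiesFDSC}(5), any two modules are joined, so the number of cross edges between distinct modules is $1$ or $2$.

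Because every step merely restates a result that is already available, I do not anticipate a genuine obstacle here; the only real computation is the check in~(1) that a vertex's $e(n)$-edge actually leaves its module, and the only point demanding care is tracking which of the two exceptional vertices $B_iB_i$, $\overline{B_i}B_i$ is being fed into Lemma~\ref{externalneighbors2}. It is also worth verifying that the module decomposition behaves as described in the smallest case $n=2$ (where flipping the second coordinate touches the module address), or else restricting attention to $n\ge 4$ wherever this structural description is invoked.
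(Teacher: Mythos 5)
Your proposal is correct and follows essentially the same route as the paper, which likewise derives all four parts directly from Lemma~\ref{externalneighbors} and Lemma~\ref{externalneighbors2} (the paper's justification is just the short paragraph preceding the lemma identifying $\overline{B_i}\,\overline{B_i}$ and $B_i\overline{B_i}$ as the external neighbors of $B_iB_i$ and $\overline{B_i}B_i$). Your explicit evaluation of the $e(n)$-adjacency rule and the caveat about the degenerate case $n=2$ are slightly more detailed than what the paper records, but the argument is the same.
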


We prove the following useful lemma. 
\begin{lemma} \label{nocommonneighbor}
	The vertices $u=B_iB_i$ (resp. $u=\overline{B_i}~\overline{B_i})$ and $v=\overline{B_i}B_i$ (resp. $v=B_i\overline{B_i})$ do not have any common neighbors in the module $G_i$ with address $B_i$ (resp.  $\overline{B_i}$).
\end{lemma}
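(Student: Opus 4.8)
The plan is to reduce the claim to a clean statement about a single module of $FDSC_n$ and then to read off a contradiction from a coordinate-locality property of the edge types. I would first note that the ``resp.'' version of the lemma is the main version with $B_i$ and $\overline{B_i}$ interchanged, so it suffices to treat a module $G_i$ of address $B_i$. By Lemma \ref{knownproportiesFDSC}(3) each $G_i$ is isomorphic to $FDSC_{n/2}$; concretely, the interior edges $e(1)$, $e(f)$ and $e(2n/2^k)$ (for $2\le k\le d$) alter only the first $n/2$ coordinates of a vertex label and, restricted to those coordinates, act exactly as the edges of $FDSC_{n/2}$, so the map $x=AB_i\mapsto A$ sending a vertex of $G_i$ to its intra-module address is such an isomorphism. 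Under this map $u=B_iB_i$ and $v=\overline{B_i}B_i$ correspond to a label $y:=B_i$ and its bitwise complement $\overline{y}$. Hence it is enough to prove: \emph{in $FDSC_m$ with $m=n/2=2^{d-1}$, a vertex $y$ and its bitwise complement $\overline{y}$ have no common neighbour}. (The argument below genuinely uses $m\ge 4$, i.e.\ $n\ge 8$.)

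Two simple facts drive the rest. First, no edge operation of $FDSC_m$ fixes a vertex: each adjacency rule changes at least one coordinate. Second (locality): every edge of $FDSC_m$ other than the top edge $e(m)$ alters only coordinates in $\{1,\dots,m/2\}$ --- indeed $e(1)$ and $e(f)$ touch only coordinates $1$ and $2$, and $e(2m/2^k)$ for $k\ge 2$ touches only coordinates in $\{1,\dots,m/2^{k-1}\}\subseteq\{1,\dots,m/2\}$; only $e(m)$ --- which sends a label with equal halves to its bitwise complement and otherwise swaps its two halves --- can change a coordinate in $\{m/2+1,\dots,m\}$.

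Now suppose for contradiction that $w$ is a common neighbour of $y$ and $\overline{y}$ in $FDSC_m$; say $w=e(a)(y)$ and $w=e(b)(\overline{y})$ for some edges $a,b$. Consider the coordinates $\{m/2+1,\dots,m\}$: if $a$ is not the top edge then $w$ agrees with $y$ there, while if $b$ is not the top edge then $w$ agrees with $\overline{y}$ there; since $y$ and $\overline{y}$ differ in \emph{every} coordinate and this coordinate set is nonempty, one of $a,b$ must be the top edge. By the symmetry of the claim under $y\leftrightarrow\overline{y}$, we may assume $a=e(m)$, so $w=e(m)(y)$. Write $y=m_1m_2$ with $|m_1|=|m_2|=m/2$. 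If $m_1=m_2$ then $w=\overline{y}$; but $w=e(b)(\overline{y})\neq\overline{y}$ since no edge operation has a fixed point, a contradiction. If $m_1\neq m_2$ then $w=m_2m_1$, and looking again at the last $m/2$ coordinates of $w$: either $b$ is not the top edge, forcing $m_1=\overline{m_2}$ and hence $w=m_2m_1=\overline{y}$ --- the same contradiction --- or $b=e(m)$, and then $\overline{m_1}\neq\overline{m_2}$ gives $e(m)(\overline{y})=\overline{m_2}\,\overline{m_1}$, so $m_2m_1=\overline{m_2}\,\overline{m_1}$ and thus $m_2=\overline{m_2}$, which is absurd. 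Every branch is contradictory, so $y$ and $\overline{y}$ have no common neighbour, and the lemma follows.

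I expect the main obstacle to be making the locality fact airtight: one has to track exactly which coordinates each of the operations $e(1)$, $e(f)$, $e(2m/2^k)$ can move and check that the single long edge $e(m)$ is the only one reaching the second half of the coordinate string (which is also why $m\ge 4$ is needed). A secondary point of care is remembering that ``common neighbour in the module $G_i$'' means ``common neighbour via interior edges'', so after transporting the problem into $FDSC_m$ one must rule out \emph{all} its edge types rather than just the hypercube-like short ones. Once locality is in hand it forces one of the two connecting edges to be $e(m)$, and the rigidity of that edge --- a full complementation, or an exact swap of the two halves --- finishes the proof with elementary comparisons of bit strings.
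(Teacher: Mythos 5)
Your proof is correct, and at its core it rests on the same two observations as the paper's own argument: every interior edge except the longest one only moves bits in a prefix on which the relevant labels agree, so it preserves a suffix on which $u$ and $v$ differ everywhere, and the one remaining long edge is rigid enough (complement or exact swap of halves) to be dispatched by direct case analysis. The packaging differs: the paper stays with the full $n$-bit labels, writes $B_i=CD$, and compares the rightmost $\frac{3n}{4}$ bits of each pair of neighbours, whereas you first transport the problem into $FDSC_{n/2}$ via the module isomorphism of Lemma \ref{knownproportiesFDSC}(3) and prove the cleaner statement that a vertex and its bitwise complement have no common neighbour there. Your version buys two genuine improvements. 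First, you explicitly handle the mixed case in which exactly one of the two connecting edges is the top edge (your branch ``$b$ is not the top edge, forcing $m_1=\overline{m_2}$ and hence $w=\overline{y}$''); the paper only compares $u_j$ with $v_k$ for $j,k$ both different from $2$ or both equal to $2$, so this case is silently skipped there (it is harmless, since the offending common vertex turns out to be $v$ itself, which is not a neighbour of $v$, but it deserves a sentence). Second, your insistence on $m\ge 4$, i.e.\ $n\ge 8$, is not pedantry: for $n=4$ the lemma as stated is actually false --- in $FDSC_4$ the vertices $0000=B_iB_i$ and $1100=\overline{B_i}B_i$ with $B_i=00$ have the common neighbours $0100$ and $1000$ inside the module of address $00$ --- and the paper's suffix comparison breaks at exactly this point, because for $n=4$ the $e(f)$-edge flips a bit inside the $D$-block, so the rightmost $\frac{3n}{4}$ bits of $u_f$ are $\overline{D}B_i$ rather than $DB_i$. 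Since the lemma is only invoked later under the hypothesis $d\ge 4$, nothing downstream is damaged, but the restriction you identified should be recorded in the statement.
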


\begin{proof}
	Without loss of generality, let $u=B_iB_i$ and $v=\overline{B_i}B_i$. Let $N(u)= \{u_1,u_2,\dots,u_{d+1},u_f\}$ and $N(v)= \{v_1,v_2,\dots,v_{d+1},v_f\}$. It is enough to prove that $u_j\neq v_k$ for any $j,k \in\{1,2,\ldots,d+1,f\}$. For the external neighbors of $u$ and $v$, we have $u_{d+1}\neq v_{d+1}$ by Lemma \ref{proportiesFDSC}. Thus, we consider interior neighbors of $u$ and $v$. Let $B_i=CD$, where $C$ and $D$ are two binary strings of length $\frac{n}{4}$. Thus, $u=CDB_i$ and $v=\overline{C}~\overline{D}B_i$.  By the definition of $FDSC_n$, the rightmost $\frac{3n}{4}$-bit binary strings of $u_j$ and $v_k$ for some $j,k \in\{1,2,\ldots,d,f\}-\{2\}$ are $DB_i$ and $\overline{D}B_i$, respectively. Since  $D\neq\overline{D}$, we have $u_j\neq v_k$ for $j,k\in\{1,2,\ldots,d,f\}-\{2\}$.
	
	Let $j=k=2$. If $C=D$, then $u_2=\overline{C} ~\overline{D}B_i=v$ and $v_2=CDB_i=u$. That is, $u_2\neq v_2$. If $C\neq D $, then $u_2= DCB_i$ and $v_2= \overline{D}~\overline{C}B_i$. Since  $DC\neq \overline{D}~\overline{C}$, we have $u_2\neq v_2$. Thus, $u$ and $v$ do not have any common neighbors.	
\end{proof}

\section{Main Results}

In this section, we present our main results on the structure connectivity of folded divide-and-swap cube. Before discussing the proofs of our results, it would be worth noting that $\kappa(G;\mathcal{H}) \geq \kappa^s (G;\mathcal{H})$ for any graph $G$ and a connected subgraph $\mathcal{H}\subseteq G$  \cite{li2018structure}. This fact will be used later in our proofs.

We first consider $ K_1 $-structure connectivity and $ K_1 $-substructure connectivity of $ FDSC_n $.  For any graph $G$, we know that $\kappa(G)=\kappa(G;K_1)=\kappa^s(G;K_1)$  \cite{lin2016structure}. Since Zhao \textit{et al.} \cite{zhao2023connectivity} recently showed that $\kappa(FDSC_n) = d+2$, we have the following theorem.

\begin{theorem} \label{thm:K1} For $ d\geq1 $ and $ n=2^d $,	$\kappa(FDSC_n;K_1)=\kappa^s(FDSC_n;K_1)=d+2$.
\end{theorem}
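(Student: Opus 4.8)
The statement follows essentially by quoting known results, so the ``proof'' is really just an assembly of facts that the excerpt has already put in place. The plan is to invoke the two structural identities recorded just before the theorem: first, $\kappa(G;K_1)=\kappa^s(G;K_1)=\kappa(G)$ for every graph $G$, which holds because a $K_1$ is a single vertex, so a $K_1$-structure-cut (and likewise a $K_1$-substructure-cut, since the only connected subgraph of $K_1$ is $K_1$ itself) is literally a vertex cut; and second, the result of Zhao \emph{et al.}~\cite{zhao2023connectivity} that $\kappa(FDSC_n)=d+2$ for $n=2^d$, $d\ge 1$. Chaining these two gives $\kappa(FDSC_n;K_1)=\kappa^s(FDSC_n;K_1)=\kappa(FDSC_n)=d+2$ immediately.

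If one instead wants a self-contained argument (not relying on the cited connectivity computation as a black box), the structure of the proof would be the usual two-sided bound. For the upper bound, I would exhibit an explicit vertex cut of size $d+2$: by Lemma~\ref{knownproportiesFDSC}(1) the graph is $(d+2)$-regular, so the neighborhood $N(u)$ of any vertex $u$ is a cut of size $d+2$ separating $u$ from the rest (one must check $FDSC_n$ has more than two vertices, which holds for $d\ge 1$ since $2^n\ge 4$). For the lower bound, one would show no set of at most $d+1$ vertices disconnects $FDSC_n$; this is the substantive part, and it is exactly what \cite{zhao2023connectivity} proves, typically by a module-decomposition argument using Lemma~\ref{knownproportiesFDSC}(3)--(5) and Lemma~\ref{proportiesFDSC}: each module $FDSC_{n/2}$ is highly connected, the modules form a clique $K_{2^{n/2}}$ under contraction, and there are enough cross edges (Lemma~\ref{proportiesFDSC}(4)) that fewer than $d+2$ vertex failures cannot sever all connections.

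The only genuine obstacle, in the self-contained version, is the lower bound $\kappa(FDSC_n)\ge d+2$, which requires the inductive module analysis; but since the paper explicitly cites \cite{zhao2023connectivity} for $\kappa(FDSC_n)=d+2$ and the equality $\kappa(G)=\kappa(G;K_1)=\kappa^s(G;K_1)$ from \cite{lin2016structure}, the intended proof is the one-line combination of those two facts, and that is what I would write here.

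\begin{proof}
	Since $K_1$ consists of a single vertex and its only connected subgraph is $K_1$ itself, a $K_1$-structure-cut and a $K_1$-substructure-cut of a graph $G$ are both precisely vertex cuts of $G$; hence $\kappa(G;K_1)=\kappa^s(G;K_1)=\kappa(G)$ for every graph $G$ (see \cite{lin2016structure}). Applying this to $G=FDSC_n$ and using the fact that $\kappa(FDSC_n)=d+2$ for $n=2^d$ with $d\ge 1$, proved by Zhao \textit{et al.} \cite{zhao2023connectivity}, we obtain
	\[
		\kappa(FDSC_n;K_1)=\kappa^s(FDSC_n;K_1)=\kappa(FDSC_n)=d+2. \qedhere
	\]
\end{proof}
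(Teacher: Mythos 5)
Your proposal is correct and matches the paper exactly: the paper also obtains Theorem~\ref{thm:K1} by combining the identity $\kappa(G)=\kappa(G;K_1)=\kappa^s(G;K_1)$ from \cite{lin2016structure} with the result $\kappa(FDSC_n)=d+2$ of \cite{zhao2023connectivity}, giving no further argument. Your additional sketch of a self-contained two-sided bound is reasonable but not part of the paper's proof.
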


We now present the following two useful lemmas to prove our first main result on $K_{1,1}$-structure connectivity and $K_{1,1}$-substructure connectivity of $FDSC_n$.

\begin{lemma} \label{lemma1-K11lowerbound}
	Let $ A_1 $ be a subset of $ \{x~|~ x\in V(FDSC_n)\} $ and $ A_2 $ be a subset of $ \{\{y,z\}~|~ (y,z)\in E(FDSC_n)\} $ with $ |A_1|+|A_2|\leq d $ and $ |A_2|\leq d-1 $ for $ d\geq $3. Then $ FDSC_n-(A_1\cup A_2) $ is connected.
\end{lemma}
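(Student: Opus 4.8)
The plan is to show connectivity by a counting/structure argument over the module decomposition of $FDSC_n$, proceeding by induction on $d$. Write $F = A_1 \cup A_2$ and note that $F$ removes at most $|A_1| + 2|A_2|$ vertices, but more usefully, each element of $A_2$ is an edge, hence a connected "chunk" that still lies inside a single module or straddles exactly two modules via a cross edge. The base case $d = 3$ (so $n = 8$) should be handled by direct inspection or by the known connectivity bound $\kappa(FDSC_n) = d+2$ together with the fact that $|A_1| + 2|A_2| \le 2d - 1 < d+2$ fails for small $d$, so in fact I expect the intended argument is genuinely inductive and the base case is verified using $FDSC_4$ as the module (since $n/2 = 4 = 2^2$, the induction hypothesis would need $d \ge 3$ for the modules, forcing a slightly careful bookkeeping of where the induction actually bottoms out — possibly the statement is really used only for $d \ge 3$ with modules $FDSC_{n/2}$ where $n/2 = 2^{d-1}$ and $d - 1 \ge 2$).

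The main step: decompose $FDSC_n$ into its $2^{n/2}$ modules $G_1, \dots, G_{2^{n/2}}$, each isomorphic to $FDSC_{n/2}$, which by Lemma~\ref{knownproportiesFDSC}(5) contract to a complete graph $K_{2^{n/2}}$. First I would bound how many modules are "heavily damaged." Since $|A_1| + |A_2| \le d$, and each module $G_i$ is $(d+1)$-regular (it is $FDSC_{n/2}$ with $n/2 = 2^{d-1}$, so $(d-1)+2 = d+1$ regular), the faults restricted to a single module $G_i$ — counting a vertex in $A_1\cap V(G_i)$ as one fault and an interior edge of $A_2$ lying in $G_i$ as one edge-fault — total at most $d$ with at most $d-1$ of them edges, so by the induction hypothesis applied to $G_i \cong FDSC_{n/2}$ (whose parameter is $d-1$, and $d-1 \ge 2$; one needs $(d-1)\ge 3$, i.e. $d \ge 4$, which is where a separate small-case check for $d=3$ is needed), $G_i - (\text{faults in } G_i)$ is connected. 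Actually the cleaner route: call a module $G_i$ \emph{good} if the total number of faulty vertices in it (including endpoints of edge-faults that lie in it) is at most $d-1$; then show at most one module can fail to be good, because making two modules non-good would require at least $2 \cdot (d)/? $ faults — here I'd use that an edge-fault contributes its endpoints to at most two modules, so $\sum_i (\text{faulty-vertex-count in } G_i) \le |A_1| + 2|A_2| \le 2d$, hence at most one module has faulty-vertex-count $\ge d$ (needs $2(d-1) \le 2d$, trivially true, so this crude bound is too weak — I would instead argue at most one module is genuinely disconnected after removing its faults, using the induction hypothesis with the finer split $|A_1^{(i)}| + |A_2^{(i)}| \le d$, $|A_2^{(i)}| \le d-1$).

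Then I would stitch the good modules together through cross edges: by Lemma~\ref{proportiesFDSC}(4) there are one or two cross edges between any pair of modules, and there are $2^{n/2} - 1 \ge 2^{d-1}-1$ modules; since $|A_1| + |A_2| \le d$, at most $d$ cross edges can be destroyed (each edge-fault kills at most one cross edge, each vertex-fault at most $d+1$ of them — no, a vertex-fault kills all cross edges at that vertex, but a vertex has exactly one external neighbor by Lemma~\ref{proportiesFDSC}(1), so actually at most one cross edge per faulty vertex, giving at most $|A_1| + |A_2| \le d$ destroyed cross edges total... wait, $e(n)$-edges are the cross edges and each vertex has exactly one, so yes each fault kills $O(1)$ cross edges). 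Since the module-quotient is $K_{2^{d-1}}$ which is $(2^{d-1}-1)$-edge-connected and $2^{d-1} - 1 \ge d$ for $d \ge 3$... (check: $d=3$: $3 \ge 3$ ✓; $d=4$: $7 \ge 4$ ✓), the surviving cross edges keep all good modules in one component. Finally I handle the at-most-one non-good module: even if $G_j$ is "bad," most of its vertices survive and each has its own external neighbor into a distinct module (Lemma~\ref{externalneighbors2}), so any surviving vertex of $G_j$ attaches to the big component; and if $G_j$ were entirely removed, $|A_1| + 2|A_2| \ge |V(G_j)| = 2^{d-1}$, impossible since $2^{d-1} > 2d$ for $d \ge 3$.

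The hard part will be the careful case analysis of the single possibly-disconnected module and making the induction's base case airtight — in particular reconciling the hypothesis's requirement "$d \ge 3$" with the fact that applying it to modules drops the parameter to $d-1$, which may violate the hypothesis; I expect one must verify $d = 3$ (and perhaps $d=4$) directly, using the explicit structure of $FDSC_8$ (resp. $FDSC_{16}$), and only then run the induction cleanly for $d \ge 5$. A secondary subtlety is that an edge-fault straddling two modules must be accounted as a \emph{cross-edge deletion plus} a single-vertex fault in each of the two modules, so the per-module fault budgets $|A_2^{(i)}| \le d-1$ need a short argument that they are not all simultaneously tight.
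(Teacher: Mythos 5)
Your proposal takes a genuinely different route from the paper -- an induction over the module decomposition -- but as written it has several gaps that prevent it from closing. First, the per-module induction budget is not verified: if all $d$ faults concentrate inside a single module $G_i\cong FDSC_{n/2}$, the induction hypothesis (whose parameter is $d-1$) only tolerates $d-1$ faults, so you cannot conclude that $G_i$ minus its faults is connected; you notice this tension yourself but never resolve it. Second, and more seriously, your treatment of the possibly-disconnected module is incomplete: you assert that each surviving vertex of the bad module ``attaches to the big component'' through its unique external neighbor, but that external neighbor may itself belong to $V(A_1\cup A_2)$, and ruling this out (or showing some surviving vertex of each component has a fault-free external neighbor) is exactly the hard case -- it is where the paper's \emph{next} lemma, which does use a module-decomposition induction, spends nearly all of its effort. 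Third, you never use the hypothesis $|A_2|\leq d-1$ in an essential way, yet it is the crux of the statement.

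The paper's actual proof is short and non-inductive. The hypotheses give $|V(A_1\cup A_2)|\leq |A_1|+2|A_2|\leq d+(d-1)=2d-1$, which is strictly below the super connectivity $2d$ of $FDSC_n$ (Zhao--Chang); hence if $FDSC_n-(A_1\cup A_2)$ were disconnected, some vertex $u$ would be isolated, so all $d+2$ of its neighbors would lie in $V(A_1\cup A_2)$. The local structure of $N_{FDSC_n}(u)$ -- three neighbors inducing a triangle with the rest an independent set, and any two neighbors of $u$ sharing at most one common neighbor other than $u$ -- then yields $|N(u)\cap V(A_1)|\leq|A_1|$ and $|N(u)\cap V(A_2)|\leq|A_2|+1$, so at most $|A_1|+|A_2|+1\leq d+1<d+2$ neighbors can be covered, a contradiction. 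If you want to salvage your approach, the missing ingredients are precisely these two: the super connectivity bound to force an isolated vertex, and the analysis of how efficiently single vertices and single edges can cover $N(u)$.
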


\begin{proof}
	Suppose to the contrary that $ FDSC_n-(A_1\cup A_2) $ is disconnected. Let $ C $ be the smallest component of $ FDSC_n-(A_1\cup A_2) $. We know that the super connectivity of $FDSC_n$ is equal to $2d$. That is, at least $2d$ vertices need to be deleted from $FDSC_n$ to disconnect it without isolating a vertex. Since $ |V(A_1\cup A_2)|\leq 2d-1 $ and $ FDSC_n-(A_1\cup A_2) $ is disconnected, we have $ |V(C)|=1 $. Let $ V(C)=\{u\} $. For two vertices $ v,w\in N_{FDSC_n}(u) $, we know that $ |N_{FDSC_n}(v)\cap N_{FDSC_n}(w)-\{u\}|\leq 1 $ by the definition of $FDSC_n$. Note that there are three vertices $ p,q,r\in N_{FDSC_n}(u)$  such that the subgraph induced by $\{p,q,r\}$  is isomorphic to  $ K_3 $ and $ N_{FDSC_n}(u)-\{p,q,r\} $ is an independent set. Thus, 
	$ |N_{FDSC_n}(u)\cap V(A_1)|\leq|A_1| $  and  $ |N_{FDSC_n}(u)\cap V(A_2)|\leq|A_2|+1 $. Since
	\begin{align*}
		|N_{FDSC_n}(u)\cap V(A_1\cup A_2)|&\leq |N_{FDSC_n}(u)\cap V(A_1)|+|N_{FDSC_n}(u)\cap V(A_2)|\\ &\leq |A_1|+|A_2|+1\\&\leq d+1\\&<d+2=|N_{FDSC_n}(u)|,
	\end{align*}
	there exists a vertex $ t\in N(u)-V(A_1\cup A_2) $. This contradicts that $ |V(C)|=1 $. Thus, $ FDSC_n-(A_1\cup A_2) $ is connected.
\end{proof}

\begin{lemma} \label{lowerboundK11}
	For $ d\geq $3, let $ A_1 $ be a subset of $ \{x~|~ x\in V(FDSC_n)\} $ and $ A_2 $ be a subset of $ \{\{y,z\}~|~ (y,z)\in E(FDSC_n)\} $ with $ |A_1|+|A_2|\leq d $. Then $ FDSC_n-(A_1\cup A_2) $ is connected.
\end{lemma}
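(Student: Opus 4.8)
The plan is to deduce this from Lemma \ref{lemma1-K11lowerbound}. That lemma already gives the conclusion whenever $|A_2| \le d-1$, so the only configuration that still needs an argument is $A_1 = \emptyset$ and $|A_2| = d$ (the hypothesis $|A_1| + |A_2| \le d$ rules out $|A_2| \ge d+1$). Moreover, if the $d$ edges of $A_2$ are not pairwise vertex-disjoint, then $|V(A_2)| \le 2d-1$, and since the super connectivity of $FDSC_n$ equals $2d$, the argument in the proof of Lemma \ref{lemma1-K11lowerbound} applies unchanged: a smallest component of $FDSC_n - V(A_2)$ would be a single vertex $u$, but since $FDSC_n[N_{FDSC_n}(u)]$ consists of a triangle together with $d-1$ isolated vertices, $|N_{FDSC_n}(u) \cap V(A_2)| \le |A_2| + 1 = d+1 < d+2$, a contradiction. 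So it remains to treat the case $A_2 = \{\{x_1,y_1\}, \dots, \{x_d,y_d\}\}$ with $x_1, y_1, \dots, x_d, y_d$ pairwise distinct. Write $S = V(A_2)$, so $|S| = 2d$, and suppose for a contradiction that $FDSC_n - S$ is disconnected.

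Here I would use the decomposition of $FDSC_n$ into its $2^{n/2}$ modules $G_1, \dots, G_{2^{n/2}}$, each isomorphic to $FDSC_{n/2}$ (Lemma \ref{knownproportiesFDSC}(3),(5)). Set $s_i = |S \cap V(G_i)|$; then $\sum_i s_i = 2d$, so at most one module $G_h$ has $s_h \ge d+1$, and every other module $G_i$ has $s_i \le d < d+1 = \kappa(FDSC_{n/2})$, so $G_i - S$ is connected and contains all but at most $d$ vertices of $G_i$. Since contracting the modules yields $K_{2^{n/2}}$ and $2^{n/2}$ is far larger than $2d$ when $d \ge 3$, all these surviving modules lie in one component of $FDSC_n - S$: two modules meeting $S$ in no vertex keep their connecting cross edge, and a module with $1 \le s_i \le d$ still has a cross edge to some module disjoint from $S$ (using that the external-neighbor relation is a perfect matching on $V(FDSC_n)$ and Lemma \ref{proportiesFDSC}(4)). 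Hence, if $G_h$ does not exist then $FDSC_n - S$ is connected, a contradiction; if it does, any component other than this big one is contained in $G_h$. Let $C$ be such a component. Then $N_{G_h}(C) \setminus C \subseteq S \cap V(G_h)$, while the $|C|$ pairwise distinct external neighbors of the vertices of $C$ all lie in $S \setminus V(G_h)$, so
\[ 2d = |S| \ge |N_{G_h}(C) \setminus C| + |C| \ge \kappa(FDSC_{n/2}) + |C| = (d+1) + |C|, \]
and therefore $|C| \le d-1$.

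It remains to rule out $1 \le |C| \le d-1$. If $|C| = 1$, say $C = \{u\}$, then $N_{FDSC_n}(u) \subseteq S$; but $S$ is a union of $d$ disjoint edges and $FDSC_n[N_{FDSC_n}(u)]$ has matching number $1$, so $d$ disjoint edges can cover at most $d+1 < d+2 = |N_{FDSC_n}(u)|$ neighbors of $u$, a contradiction. If $|C| \ge 2$, I would combine the fact that the super connectivity of $FDSC_{n/2}$ equals $2d-2$ with a neighborhood count in $G_h$: in $FDSC_{n/2}$, which is $(d+1)$-regular and in which any two vertices have at most two common neighbors, a connected set $C$ with $|C| \ge 2$ has $|N_{G_h}(C) \setminus C| \ge 2d-2$, and equality forces $|C| = 2$, $C$ to be an edge $\{u,v\}$ lying in a $K_4$ whose remaining two vertices are $p, q$, and $S = N_{FDSC_n}(\{u,v\}) \setminus \{u,v\} = \{p,q\} \cup (N_{FDSC_n}(u) \setminus \{v,p,q\}) \cup (N_{FDSC_n}(v) \setminus \{u,p,q\})$. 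One then checks, directly from the binary-label definition of the $e(\cdot)$-edges of $FDSC_n$, that apart from the single edge $\{p,q\}$ this $2d$-set splits into two independent sets of size $d-1$ between which the bipartite graph has no perfect matching (in fact essentially no edges). Hence $S$ cannot be a union of $d$ disjoint edges, which is the final contradiction.

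The main obstacle is this last paragraph, in particular the case $|C| = 2$: one must pin down a $2d$-element vertex cut of $FDSC_n$ that isolates an edge closely enough to conclude it can never be an independent union of $d$ edges, which needs the explicit $e(\cdot)$-adjacency of $FDSC_n$ (in the spirit of Lemma \ref{nocommonneighbor}) rather than just regularity and the (super-)connectivity values; securing the lower bound $|N_{G_h}(C) \setminus C| \ge 2d-2$ for all connected $C$ with $2 \le |C| \le d-1$ in $FDSC_{n/2}$ is the other delicate point.
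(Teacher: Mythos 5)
Your reduction to the case $|A_1|=0$, $|A_2|=d$ via Lemma \ref{lemma1-K11lowerbound}, the further reduction to $d$ pairwise vertex-disjoint edges, the module bookkeeping that confines every small component $C$ to the single module $G_h$ with $s_h\ge d+1$, and the elimination of $|C|=1$ (the matching-number argument on $N_{FDSC_n}(u)$, a triangle plus $d-1$ isolated vertices, is exactly right) are all sound. The proof breaks, however, precisely at the step you flag. First, the inequality $|N_{G_h}(C)\setminus C|\ge 2d-2$ for connected $C$ with $|C|\ge 2$ does \emph{not} follow from the super connectivity of $FDSC_{n/2}$ being $2d-2$: that value only guarantees that a vertex cut of size less than $2d-2$ leaves an isolated vertex \emph{somewhere} in the remaining graph, and that isolated vertex may lie outside $C$, in which case you only recover $|N_{G_h}(C)\setminus C|\ge d+1$ and hence $|C|\le d-1$ again, with no progress. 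So the cases $3\le |C|\le d-1$ are not actually reduced to $|C|=2$. Second, even granting $|C|=2$, the concluding claim --- that the resulting $2d$-set splits, apart from the edge $\{p,q\}$, into two independent $(d-1)$-sets admitting no perfect matching, so that $S$ cannot be a union of $d$ disjoint edges --- is asserted but never verified; checking it requires an explicit computation with the $e(\cdot)$-labels in the spirit of Lemma \ref{nocommonneighbor} that is neither in the paper nor obviously routine. These two gaps are the entire content of the hard case, so the argument is incomplete as it stands.

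For contrast, the paper takes a genuinely different route that sidesteps both difficulties: it inducts on $n$ (base case $FDSC_8$), sorts the $d$ faulty edges per module into cross-edge endpoints $P_i$ and internal edges $Q_i$, applies the induction hypothesis (the same lemma for $FDSC_{n/2}$ with budget $d-1$) to keep each damaged module connected, and in the extremal case where all $d$ edges touch one module $G_1$ observes that no surviving vertex of $G_1$ can have a faulty external neighbor, so each such vertex routes out individually. If you wish to keep your non-inductive approach, you would need to first establish, as standalone lemmas about $FDSC_m$, (i) a lower bound on $|N(C)|$ for connected sets $C$ with $2\le |C|\le d-1$, and (ii) the precise structure of a minimum cut isolating an edge; neither is available from the connectivity and super connectivity values alone.
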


\begin{proof} We prove this statement by induction on $n$. It is easy to check that the statement is true for $FDSC_8$. Assume that the statement holds for $ FDSC_{\frac{n}{2}} $. 
	
	Note that by Lemma \ref{lemma1-K11lowerbound}, the remaining graph $ FDSC_n-(A_1\cup A_2) $ is connected if $ |A_2|\leq d-1 $. Thus, it is enough to consider the case when $ |A_2|=d $ and $ |A_1|=0 $ to complete the proof.\\
	
	Considering a module  $G_i $ of $FDSC_n$, we first let
	\[{P_i=\{x~|~\{x,y\}\in A_2, x\in V(G_i), y\notin V(G_i)\}},\] \[ Q_i=\{\{y,z\}~|~ \{y,z\}\in A_2, y\in V(G_i), z\in V(G_i) \}\]
	and let \[ R_i=\{\{p_1,p_2\}~| ~\{p_1,p_2\}\in A_2 ~\text{and}~ p_1\in P_i\} \cup \{\{q_1,q_2\}~| ~\{q_1,q_2\}\in Q_i\}\]  for $ i\in N $.
	We obviously have $ |P_i|+|Q_i|\leq |A_1|+|A_2|=d $ for each $i \in N$. 
	
	If $  |P_i|+|Q_i|=0$ for any  $ i\in N $, then the module $G_i$ is called intact in $FDSC_n - (A_1 \cup A_2)$. By Lemma \ref{proportiesFDSC} (4), all the intact modules of $FDSC_n - (A_1 \cup A_2)$ are contained in the same component, say $C$, of the remaining graph. 
	\\
	\noindent We need to consider the following two cases. 
	
	\noindent\textbf{Case 1.} Let $ |P_i|+|Q_i|\leq d-2$ for each $ i\in N$.\\
	Consider any module $G_k$ with $ |P_k|+|Q_k|>0$. Note that $ G_k- (P_k\cup Q_k) $ is connected by the induction hypothesis. We know that there are at most $2d$ modules that are not intact in $FDSC_n - (A_1 \cup A_2)$. Since 
	\[	
	2^\frac{n}{2}-|V(P_k\cup Q_k)|\geq2^\frac{n}{2}-2(d-2)>2d-1
	\]
	when $d\geq 3$ for each $G_k$, there exists an edge joining a vertex from $ G_k- (P_k\cup Q_k) $ to a vertex from an intact module contained in $C$. Thus,  the remaining graph $ FDSC_n-(A_1\cup A_2) $ is connected.
	
	\noindent\textbf{Case 2.} Let $ |P_i|+|Q_i|\geq d-1$ for some $ i\in N$.\\
	Without loss of generality, assume that $ |P_1|+|Q_1|\geq d-1$ and $ |P_1|+|Q_1|= \max\{|P_i|+|Q_i|~|~i\in N\} $. There are two cases to consider.
	
	\textbf{Case 2.1.} If $ |P_1|+|Q_1|=d $, then we have the following observations:
	\begin{itemize}
		\item[\textbf{(F1)}] For every vertex $ z\in V(A_2) - V(P_1\cup Q_1) $, there exists a vertex $ y\in P_1 $ such that $ \{y,z\}\in A_2 $. That is, $|Q_i|=0 $ and $R_i\cap R_j= \emptyset $ for each $i,j\in N-\{1\}$, where $  i\neq j$. 
		\item[\textbf{(F2)}] 
		
		By Lemma \ref{proportiesFDSC} (2), the number of vertices in $G_1$ having their external neighbors in the same module is exactly two. Without loss of generality, assume that these vertices have their external neighbors in $G_2$. Note that $|Q_2|=0$ by (F1), thus we have $ |P_2|\leq 2 $.
		
		\item[\textbf{(F3)}]  For each $ i\in \{3,\ldots,|P_1|+1\}$, since  $|Q_i|=0$ by (F1), we have $ |P_i|\leq 1 $. 
		
		\item[\textbf{(F4)}] $ |P_i|+|Q_i|=0 $  for each $ i\in \{|P_1|+2,\ldots, 2^\frac{n}{2}\}$.		
	\end{itemize} 
	
	By the induction hypothesis, $ G_i- (P_i\cup Q_i) $ is connected for each $ i\in N-\{1\} $ and has at least $2^\frac{n}{2}-2$ vertices. There are at most $d+1$ modules that are not intact in $ FDSC_n-(A_1\cup A_2) $. Since $2^\frac{n}{2}-2 > d $ for $d\geq 3$, there exists an edge joining a vertex from $ G_i- (P_i\cup Q_i) $ to a vertex from an intact module contained in $C$ for each $i\in N-\{1\}$. That is, $ FDSC_n-G_1-(A_1\cup A_2) $ is connected.

	Consider a vertex $u \in G_1-(P_1\cup Q_1)$. Let $u_{d+1}\in V(A_1\cup A_2)$. Since $|Q_i|=0$ for each $ i\in N-\{1\} $, we have $u_{d+1}\in A_1$. This contradicts the fact that $|A_1|=0$. We then let $u_{d+1}\notin V(A_1\cup A_2)$. It follows that the fault-free edge $(u,u_{d+1})$ connects the component containing $u$ to $ FDSC_n-G_1-(A_1\cup A_2) $. Thus, $ FDSC_n-(A_1\cup A_2) $ is connected.

	\textbf{Case 2.2.} Let $ |P_1|+|Q_1|=d-1$.  Since $ |P_1|+|Q_1|=d-1$ and $|A_2|=d$, we have $|A_2|-|R_1|=1$.  Thus, either there exists exactly one module $G_i$ where $|Q_i|=1$ for $i\in N-\{1\}$ or there exist exactly two modules $G_i$ and $G_j$ such that $R_i\cap R_j\neq \emptyset $ for $i,j\in N-\{1\}$ and $ i\neq j$. Since $ |P_i|+|Q_i| \leq d-1$ for each $i\in N$, it is clear that $ G_i- (P_i\cup Q_i) $ is connected by the induction hypothesis. 
	
	For any module $G_i$ with $ |P_i|+|Q_i| \neq 0 $, there are at least $2^\frac{n}{2} - 2(d-1)$ vertices in  $ G_i- (P_i\cup Q_i) $. We also know that there are at most $d+2$ modules that are not intact. For the modules which are not intact, we consider the external neighbors of the remaining vertices. Since
	\[ 2^\frac{n}{2} - 2(d-1) > d+1\] when $d \geq 3$ for each $G_i$, there exists an edge joining a vertex from  $ G_i- (P_i\cup Q_i) $ to a vertex from an intact module contained in $C$. Thus,  the remaining graph $ FDSC_n-(A_1\cup A_2) $ is connected.
\end{proof}

It is easy to check that $\kappa(FDSC_n;K_{1,1})=\kappa^s(FDSC_n;K_{1,1})=2$ for $d\in \{1,2\}$ and $ n=2^d $. In the following theorem, we determine $\kappa(FDSC_n;K_{1,1}) $ and $\kappa^s(FDSC_n;K_{1,1})$ for $d\geq 3$ and $n=2^d$.

\begin{theorem} \label{thm:K11} For $ d\geq3 $ and $ n=2^d $,
	$\kappa(FDSC_n;K_{1,1})=\kappa^s(FDSC_n;K_{1,1})=d+1$.
\end{theorem}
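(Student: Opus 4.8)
The plan is to prove the two bounds $\kappa^s(FDSC_n;K_{1,1}) \geq d+1$ and $\kappa(FDSC_n;K_{1,1}) \leq d+1$ separately; combined with the general inequality $\kappa(G;\mathcal{H}) \geq \kappa^s(G;\mathcal{H})$ quoted earlier, these yield the claimed equality. The lower bound is exactly what Lemma~\ref{lowerboundK11} is built for: a $K_{1,1}$-substructure-cut consists of subgraphs each isomorphic to $K_1$ or $K_{1,1}$, i.e. a set of isolated vertices and single edges. If such a cut $\mathcal{F}$ has size at most $d$, then splitting $\mathcal{F}$ into its singleton part $A_1$ and its edge part $A_2$ gives exactly the hypothesis of Lemma~\ref{lowerboundK11} with $|A_1|+|A_2| \leq d$, so $FDSC_n - \mathcal{F} = FDSC_n - (A_1 \cup A_2)$ is connected — contradicting that $\mathcal{F}$ is a cut. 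Hence every $K_{1,1}$-substructure-cut has size at least $d+1$, giving the lower bound.

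For the upper bound I would exhibit an explicit $K_{1,1}$-structure-cut of size $d+1$. The natural target is the neighborhood of a single vertex $u$: since $FDSC_n$ is $(d+2)$-regular and contains triangles, pick a vertex $u$ together with two of its neighbors $p,q$ that form a triangle with $u$ (such exist because the girth is $3$, cf. Lemma~\ref{knownproportiesFDSC}(4), and in fact the proof of Lemma~\ref{lemma1-K11lowerbound} notes there are three neighbors $p,q,r$ of $u$ inducing a $K_3$). Then $N_{FDSC_n}(u)$ has $d+2$ vertices; cover the edge $pq$ (wait — $pq$ is a neighbor pair, but I want edges incident to vertices of $N(u)$ that lie in the cut). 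The cleaner construction: take the $d+2$ neighbors of $u$; among them find one edge $e = (a,b)$ with $a,b \in N(u)$ (the triangle $upq$ gives such an edge $ab$); then the $d$ remaining neighbors each form a $K_1$, and together with the single $K_{1,1}$ equal to $ab$ we get $d+1$ subgraphs whose removal deletes all of $N(u)$, isolating $u$. I must double-check each piece is a genuine subgraph isomorphic to $K_{1,1}$ or (for $K_{1,1}$-structure, not substructure) I may instead need to pair each isolated neighbor with an outside edge so that every piece is literally a $K_{1,1}$; for the structure version I would pair up the remaining $d$ neighbors into edges going outward — since each neighbor $w$ of $u$ has its own neighbors, choose an edge $(w, w')$ with $w' \notin N[u]$ as needed. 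One must verify that one can do this with exactly $\lceil (d+2)/2 \rceil$ or so edges, so I should recount: covering $d+2$ vertices by $K_{1,1}$'s needs at least $\lceil (d+2)/2\rceil$ edges, which is far below $d+1$, so there is slack; the constraint is really matching the lower bound $d+1$, so I actually want a cut of size \emph{exactly} $d+1$, and the construction "one edge inside $N(u)$ plus $d$ singletons, then upgrade singletons to edges for the structure case" gives $d+1$ pieces total. I would write this out carefully, checking adjacencies via the definition of $FDSC_n$.

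The main obstacle is the upper-bound construction's bookkeeping: I need to confirm that around a suitable vertex $u$ one can select $d$ edges, each isomorphic to $K_{1,1}$, that are vertex-disjoint enough to collectively cover $d$ of the neighbors of $u$ without accidentally also needing to remove $u$ itself or leaving $FDSC_n - \mathcal{F}$ connected through some unremoved neighbor; equivalently, I must ensure the removed set contains $N(u)$ entirely and that $u$ survives (so the pieces must avoid $u$). Using the triangle $\{u,p,q\}$ to absorb two neighbors into one $K_{1,1}$ (namely the edge $pq$, which avoids $u$) is the key trick that makes $d+1$ achievable rather than $\lceil(d+2)/2\rceil+\text{something}$; then the other $d$ neighbors $r, w_1, \dots, w_{d-1}$ are handled by edges $(r, r''), (w_j, w_j'')$ leading away from $u$, which exist since every vertex has degree $d+2 \geq 5$. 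I would verify that such outward edges can be chosen avoiding $N[u]$ using Lemma~\ref{proportiesFDSC} and the local structure, but since we have enormous slack in the graph's size this should go through routinely. Finally, I note the small cases $d \in \{1,2\}$ are excluded (they were handled by the remark preceding the theorem giving value $2$), and the induction base $FDSC_8$ for Lemma~\ref{lowerboundK11} is assumed checked, so the theorem follows for all $d \geq 3$.
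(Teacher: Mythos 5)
Your proposal is correct and follows essentially the same route as the paper: the lower bound comes from applying Lemma~\ref{lowerboundK11} to the singleton/edge decomposition of a putative substructure-cut of size at most $d$, and the upper bound isolates a vertex $u$ by covering its $d+2$ neighbors with $d+1$ copies of $K_{1,1}$, one of which is an edge inside a triangle through $u$ (the paper uses $(u_d,u_f)$, since $(u_d)_1=u_f$) while the remaining $d$ edges point outward from the other neighbors. The only bookkeeping detail you left implicit, which the paper handles by taking $F_1=FDSC_n[\{u_1,(u_1)_{d+1}\}]$, is that the outward edge chosen at $u_1$ must not be $(u_1,u)$ itself, i.e.\ each piece must avoid $u$.
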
	

\begin{proof}
	Consider a vertex $ u $ from any module, say $ G_i $, of $ FDSC_n $. Let $N_{FDSC_n}(u) =\{u_1,u_2\ldots,\\u_{d+1},u_f\} $. The induced subgraph $ FDSC_n[{u_j,(u_j)_1}] $	will be denoted by $ F_j $ for $ j\in\{2,\ldots,d+1\} $, where $(u_d)_1=u_f$. Let $ F_1 $ denote $ FDSC_n[{u_1,(u_1)_{d+1}}] $. Obviously, $ F_j\cong K_{1,1} $ for each $ j\in\{1,\ldots,d+1\} $ (see Figure \ref{K11}). Let $ \mathcal{F}=\{F_1,F_2,\ldots,F_{d+1}\} $. Note that $ FDSC_n-\mathcal{F} $ is disconnected. Thus, $\kappa(FDSC_n;K_{1,1})\leq d+1$.
	
	By Lemma \ref{lowerboundK11}, we know that any $K_{1,1}$-substructure-cut has more than $d$ elements. That is,  $\kappa^s(FDSC_n;K_{1,1})\geq d+1$. Note that  $\kappa(FDSC_n;K_{1,1}) \geq \kappa^s(FDSC_n;K_{1,1})$ and this finishes the proof.
	\begin{figure}[h]
		\centering
		\includegraphics[width=0.8\textwidth]{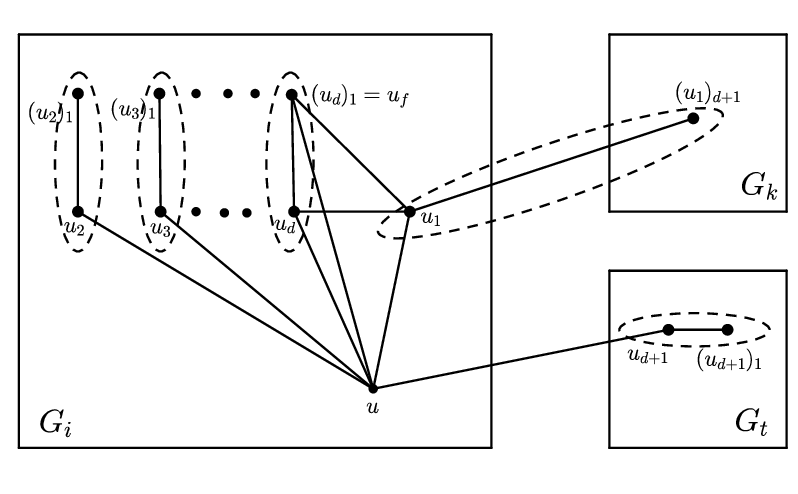}
		\caption{A $K_{1,1}$-structure-cut of $FDSC_n$}
		\label{K11}
	\end{figure}
\end{proof}

In the rest of this section, we consider $K_{1,m}$-structure connectivity and $K_{1,m}$-substructure connectivity of $FDSC_n$ where $ 2\leq m \leq d+2 $. We first prove the following two lemmas.

\begin{lemma} \label{lowerboundK1m}For $ d\geq1 $ and $ n=2^d $,
	$\kappa^s(FDSC_n;K_{1,m})\geq \lfloor\frac{d}{2}\rfloor+1$ where $ 2\leq m \leq d+2 $.
\end{lemma}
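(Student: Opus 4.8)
The plan is to establish the lower bound $\kappa^s(FDSC_n;K_{1,m}) \geq \lfloor d/2\rfloor + 1$ by showing that any set $\mathcal F$ of at most $\lfloor d/2\rfloor$ subgraphs, each isomorphic to a connected subgraph of $K_{1,m}$ (hence each a star with at most $m+1 \leq d+3$ vertices), cannot disconnect $FDSC_n$. Since every element of $\mathcal F$ has at most $m+1$ vertices, the total number of vertices removed is at most $(\lfloor d/2\rfloor)(m+1)$, which is a constant-in-$n$ bound; I would first reduce to this vertex-counting picture and then argue connectivity survives. The natural approach is induction on $n$ (equivalently on $d$), exactly mirroring the structure of Lemma~\ref{lowerboundK11}: handle small $d$ directly as a base case, then for the inductive step decompose $FDSC_n$ into its $2^{n/2}$ modules $G_i \cong FDSC_{n/2}$ and track, for each module, how many of the removed vertices lie inside it and how many cross edges from it are destroyed.

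The key steps, in order, would be: (i) observe that a single $K_{1,m}$-substructure (a star) intersects any module $G_i$ in at most a bounded number of vertices, and more importantly its center lies in at most one module, so the "damage'' it does is concentrated; (ii) for each module $G_i$, let $t_i$ be the number of removed vertices in $G_i$ plus the number of destroyed cross edges incident to $G_i$, and note $\sum_i t_i$ is controlled by $\lfloor d/2\rfloor$ times the star size; (iii) call $G_i$ \emph{intact} if $t_i = 0$; by Lemma~\ref{proportiesFDSC}(4) and Lemma~\ref{knownproportiesFDSC}(5) all intact modules lie in one component $C$, and one shows there are few non-intact modules; (iv) for each non-intact module, use that $G_i$ minus its internal faults is still connected (by the induction hypothesis, since the internal fault count is well below the threshold for $FDSC_{n/2}$) and that it retains at least $2^{n/2} - O(d)$ vertices, so some surviving vertex has a fault-free cross edge into an intact module, attaching $G_i$ to $C$; (v) conclude $FDSC_n - \mathcal F$ is connected. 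One must also separately verify the base case, e.g.\ $FDSC_8$ (where $\lfloor d/2\rfloor = 1$), by noting that removing a single star cannot disconnect it — this follows since the connectivity $\kappa(FDSC_n) = d+2 \geq 4 > m+1$ fails for large $m$, so here one needs the actual structural argument rather than crude counting, or one picks the base case large enough that $d+2$ exceeds the relevant star size.

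The main obstacle I anticipate is the induction hypothesis bookkeeping in step (iv): I need the internal fault count in each module to be small enough (roughly $\lfloor (d/2 - 1)/2\rfloor + 1$ or better, for $FDSC_{n/2}$) to invoke the inductive statement, and this requires carefully arguing that the $\lfloor d/2\rfloor$ stars cannot all pile their vertices into a single module while simultaneously killing enough cross edges — here Lemma~\ref{proportiesFDSC}(2)--(3) (distinct external neighbors) and Lemma~\ref{nocommonneighbor} are the tools that prevent one star from sabotaging the connection of several modules at once. A secondary subtlety is that a star of size $m+1$ with $m = d+2$ has $d+3$ vertices, comparable to the degree $d+2$, so when counting external neighbors of a module's surviving vertices I must ensure the inequality $2^{n/2} - (\text{faults}) > (\text{number of non-intact modules})$ still holds; since $2^{n/2}$ grows doubly exponentially while everything else is linear in $d$, this is true for $d$ at least the base-case value, but the base case itself must be chosen with $m \leq d+2$ in mind. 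Finally I would remark that the matching upper bound and the $K_{1,1}$, $K_1$ cases already proved combine to give the stated equality in the abstract for $2 \le m \le d+1$.
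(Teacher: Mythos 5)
Your outline reproduces Case~1 of the paper's argument (every module $G_i-\mathcal{F}^i$ stays connected, so a counting estimate $2^{n/2}-O(d\cdot m)>|T|$ attaches each damaged module to the component containing the intact ones), but it has a genuine gap exactly where you flag your ``main obstacle'': step~(iv) assumes each non-intact module survives removal of its internal faults because ``the internal fault count is well below the threshold for $FDSC_{n/2}$.'' That is false in the critical case. The induction hypothesis gives $\kappa^s(FDSC_{n/2};K_{1,m})\geq\lfloor\frac{d-1}{2}\rfloor+1$, and when $d$ is even this equals $\lfloor\frac{d}{2}\rfloor$, so all $\lfloor\frac{d}{2}\rfloor$ stars \emph{can} concentrate in one module $G_1$ and disconnect it internally. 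Your proposed remedy --- arguing via Lemma~\ref{proportiesFDSC} and Lemma~\ref{nocommonneighbor} that the stars ``cannot all pile into a single module'' --- is not how the paper escapes, and I do not see how it could work: nothing prevents the pile-up. What the paper actually does in its Case~2 is accept the pile-up and extract the pigeonhole consequence $|\mathcal{F}^1|=|\mathcal{F}|=\frac{d}{2}$, i.e.\ \emph{every} faulty star meets $G_1$. It then runs a component-by-component analysis of $G_1-\mathcal{F}^1$ (components of size $>1$ versus isolated vertices, with separate treatment of the special vertices $B_1B_1$ and $\overline{B_1}B_1$ whose external neighbors land in the same module), showing in each subcase that if a component's external neighbors were all faulty, some star $F_i$ would have to lie entirely outside $G_1$ --- contradicting $F_i\cap G_1\neq\emptyset$. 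This is where Lemma~\ref{proportiesFDSC} and Lemma~\ref{nocommonneighbor} are really used, and it is the bulk of the proof; your proposal does not contain it.

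A secondary issue: your treatment of the base case is not right as stated. For $d=3$ and $m=d+2=5$ a single star has up to $6$ vertices while $\kappa(FDSC_8)=5$, so the crude comparison with connectivity does not settle $FDSC_8$; one needs the structural verification the paper defers to a direct check. Also note that in Case~2 one must still separately verify that the \emph{other} non-intact modules $G_i$, $i\in T\setminus\{1\}$ (which satisfy $|\mathcal{F}^i|\leq 2$ since every star is anchored in $G_1$), remain connected and reach an intact module --- the paper handles $d\geq 6$ and $d=4$ separately here. None of this is fatal to the overall strategy, which is the same as the paper's, but as written the proposal is missing the central argument.
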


\begin{proof}
	It can be readily checked that the statement holds when $d$ is either $1$ or $2$. In the sequel, we let $d\geq 3$ and proceed the proof by induction on $n$. To establish the base case, it is easy to verify that $ \kappa^s(FDSC_8;K_{1,m})\geq 2$ for $ 2\leq m \leq 5 $.  We now assume that the statement holds for $ FDSC_{\frac{n}{2}}$.

	Suppose to the contrary that there exists a $ K_{1,m} $-substructure-cut  of $ FDSC_n $, say $ \mathcal{F} $, such that $ |\mathcal{F}|\leq\lfloor\frac{d}{2}\rfloor $. Let $\mathcal{F} = \{F_1, F_2,\dots,F_x\}$ be a set of subgraphs of $ FDSC_n $ where $ F_i $ is isomorphic to a connected subgraph of $ K_{1,m} $ and let $ x\leq \lfloor\frac{d}{2}\rfloor  $.

	For each $i \in N$, we let $\mathcal{F}^i=\bigcup_{F_j\in\mathcal{F} } F_j \cap FDSC_{\frac{n}{2}}^i$. It is clear that any element of $ \mathcal{F}^i $ is isomorphic to a connected subgraph of $ K_{1,m} $. Note that $ |\mathcal{F}^i|\leq |\mathcal{F}|\leq \lfloor\frac{d}{2}\rfloor$ for each $ i\in N $ by  Definition \ref{defFDSCn}. 
	
	Let $ T=\{i~\vert~ \mathcal{F}\cap G_i\neq \emptyset\} $ and let $ G_T=\cup_{i\in T}G_i$. Throughout this proof, we call a module $G_i$ intact if $G_i \cap \mathcal{F} = \emptyset$. Note here that $ G_T$ is the union of the modules that are not intact in the remaining graph $FDSC_n - \mathcal{F}$. The graph $ FDSC_n-G_T $ is connected by Lemma \ref{proportiesFDSC} (4). 
	
	There are two cases to consider. 
		
	\noindent\textbf{Case 1.} Suppose that $ G_i-\mathcal{F}^i $ is connected for every $ i\in N $.
	
	Since $|\mathcal{F}|\leq \lfloor\frac{d}{2}\rfloor$, there are at most $2\lfloor\frac{d}{2}\rfloor$ modules that are not intact by Lemma \ref{proportiesFDSC}. We now consider the external neighbors of vertices of $ G_i-\mathcal{F}^i $ for any $i\in T$. Note that
	\begin{align*}
		|V(G_i-\mathcal{F}^i)|&\geq |V(G_i)|-|V(K_{1,m})|\times|\mathcal{F}^i|\\& \geq 2^\frac{n}{2}-(m+1)\lfloor\frac{d}{2}\rfloor\\& >2\lfloor\frac{d}{2}\rfloor-1	
	\end{align*}
	for each $i\in T$, where $ d\geq3 $. Thus, there exists an edge joining a vertex from $ G_i-\mathcal{F}^i $ to a vertex from  $ FDSC_n-G_T $ for each $i\in T$.  Hence,  $ FDSC_n-\mathcal{F} $ is connected, a contradiction.

	\noindent\textbf{Case 2.} Suppose that $ G_i-\mathcal{F}^i $ is disconnected for some $ i\in N $, say $i=1$. 
	
	By the induction hypothesis, $ |\mathcal{F}^1|\geq \lfloor\frac{d-1}{2}\rfloor +1 $. Note that  $ |\mathcal{F}|\leq  \lfloor\frac{d}{2}\rfloor$ by the assumption and  $ \lfloor\frac{d-1}{2}\rfloor+1 \leq \lfloor\frac{d}{2}\rfloor$ holds only if $d$ is even. In the sequel of the proof, we assume that $d$ is even and thus  $|\mathcal{F}^1|= |\mathcal{F}|=\frac{d}{2}$, where $d\geq 4$. It follows that $F_i \cap G_1 \neq \emptyset $ for each $F_i \in \mathcal{F}$ and there are at most $ \frac{d}{2} $ modules which are not intact except the module $ G_1 $. By Lemma \ref{proportiesFDSC}, $|\mathcal{F}^i|$ is either 1 or 2 for $ i\in T-\{1\} $. We also know that there exists at most one module satisfying $|\mathcal{F}^i| = 2$  where $ i\in T-\{1\} $. Without loss of generality, we assume that $|\mathcal{F}^2|$ is either 1 or 2, and thus $|\mathcal{F}^i| =1$ for $ i\in T-\{1,2\} $. It is worth noting that there are at most $2m$ faulty vertices in $G_i$ for any $i\in T-\{1\}$. That is, $|V(\mathcal{F}^i)| \leq 2m$ for any $i\in T-\{1\}$.  
	
	Let us now consider the connectedness of each module in the remaining graph $ FDSC_n-\mathcal{F} $. For each $ i\in T-\{1\}$, we know that  $|\mathcal{F}^i|\leq 2 $. On the other hand, $\kappa^s(FDSC_{\frac{n}{2}};K_{1,m})\geq \lfloor\frac{d-1}{2}\rfloor+1$ by the induction hypothesis and $\lfloor\frac{d-1}{2}\rfloor+1 > 2 $ when $d>4$. Thus, $ G_i-\mathcal{F}^i $ is connected for each $ i\in T-\{1\}$ when $d \geq 6$.
	
	If $d \geq 6$, then
	\begin{align*}
		|V(G_i-\mathcal{F}^i)|\geq 2^\frac{n}{2}-2m >\frac{d}{2}	
	\end{align*}
	for each $i\in T-\{1\}$. Thus, there exists an edge joining a vertex from  $ G_i-\mathcal{F}^i $ to a vertex from  $ FDSC_n-G_T $ for each $i\in T-\{1\}$. That is,  $ FDSC_n-G_1-\mathcal{F} $ is connected.
		
	Let $d=4$. If $|\mathcal{F}^2|=1$, then   $G_i-\mathcal{F}^i $ is connected for each $ i\in T-\{1\}$ and it is easy to see that  $ FDSC_n-G_1-\mathcal{F} $ is connected. If $|\mathcal{F}^2|=2$, then $G_2-\mathcal{F}^2$ may or may not be connected. Note that the modules except $G_1$ and $G_2$ are intact since $|\mathcal{F}|\leq2$. That is, $FDSC_n-G_1-G_2$ is connected. Consider any vertex $u$ in $G_2-\mathcal{F}^2$. The external neighbor $u_{d+1}$ of $u$ is in $FDSC_n-G_1-G_2$. There exists  a fault-free edge $(u, u_{d+1})$ between $G_2-\mathcal{F}^2$ and $FDSC_n-G_1-G_2$. Thus, $ FDSC_n-G_1-\mathcal{F} $ is connected.
	
	We now consider the vertices in $G_1$ to complete the proof. Let  $ u=B_1B_1 $ and $ v=\overline{B_1}B_1 $ be two vertices in $ G_1 $, where $ B_1 $ is the address of the module $ G_1 $. Note that $ u_{d+1} $ and $  v_{d+1} $ are in the same module, say $ G_2 $ by Lemma \ref{proportiesFDSC} (2). Let $ C $ be a component in $ G_1-\mathcal{F}^1 $. We treat the cases $ |V(C)|>1 $ and $ |V(C)|=1 $ separately.
	
	\begin{itemize}
		\item [$ (1) $]
		Let $ |V(C)|>1 $. We then have three possibilities:
		\begin{itemize}
			\item [$ (a) $]Both $ u$ and $v$ are in $ C $.
			
			If there exists a vertex $w$ in $ C $ whose external neighbor $w_{d+1}$ is not a vertex of $ \mathcal{F }$, then $ C $ is connected to $ FDSC_n-G_1-\mathcal{F} $ with the fault-free edge  $ (w,w_{d+1}) $. Thus, $ u_{d+1} $ and $  v_{d+1} $ are both in $ V(\mathcal{F})$.
			
			\begin{itemize}
				\item[] Let $ u_{d+1} $ and $  v_{d+1} $ be in the same faulty subset $ F_i\in \mathcal{F} $. By Lemma \ref{nocommonneighbor}, both of $ u_{d+1} $ and $  v_{d+1} $ cannot belong to the set of leaves of $ F_i $. Without loss of generality, assume that $ u_{d+1} $ is the center of $F_i$ and $  v_{d+1} $ is a leaf of $F_i$. By Lemma \ref{proportiesFDSC}, $u$ is the only external neighbor of $ u_{d+1} $. Thus, all the vertices of $F_i$ are in $G_2$. Moreover, any neighbor of $ u_{d+1} $ except $ v_{d+1} $ cannot have an external neighbor in $ G_1 $ by Lemma \ref{proportiesFDSC} (see Figure \ref{K1mrenkli} (1)). Thus, $ F_i\cap G_1=\emptyset $ and $ F_i\cap G_2\neq \emptyset $, this contradicts that $ |\mathcal{F}^1|= |\mathcal{F}|=\frac{d}{2}$. 
				\begin{figure}[h]
					\centering
					\includegraphics[width=0.85\textwidth]{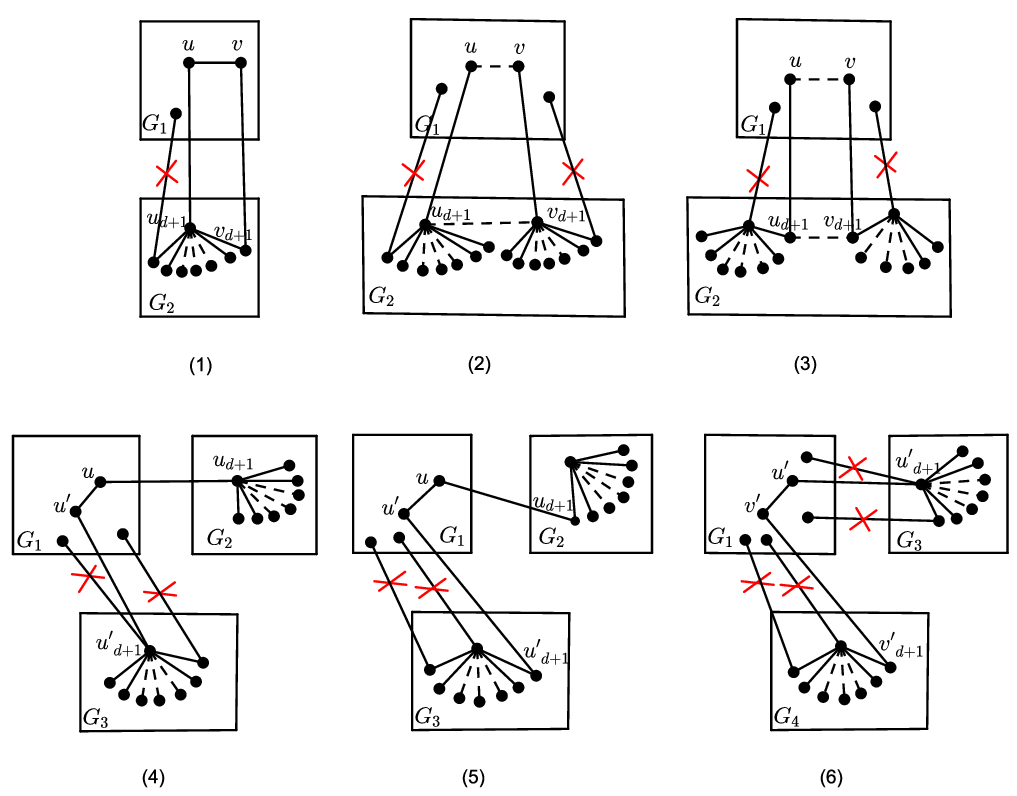}
					\caption{Explanation of Lemma \ref{lowerboundK1m}}
					\label{K1mrenkli}
				\end{figure}		
				\item[]   Let $ u_{d+1} $ and $  v_{d+1} $ be in different faulty subsets $ F_i $ and $ F_j $, respectively, where $ F_i, F_j\in \mathcal{F} $. We consider the following two cases. 
				\begin{itemize}
					\item[(i)] If $ u_{d+1} $ is the center of $ F_i $, then  all the vertices of $ F_i $ are in $ G_2 $ by Lemma \ref{proportiesFDSC}. Moreover, the neighbors of $ u_{d+1} $, except $ v_{d+1} $ (if they are adjacent), cannot have an external neighbor in $ G_1 $ by Lemma \ref{proportiesFDSC} (see Figure \ref{K1mrenkli} (2) for one of the two possible roles of $ v_{d+1} $ in $ F_j $). 
					\item[(ii)] If $ u_{d+1} $ is a leaf in $ F_i $, then the other vertices of $ F_i $  cannot be in $ G_1 $ by Lemma \ref{proportiesFDSC}. Moreover, any neighbor of the center of $ F_i $  cannot have an external neighbor in $ G_1 $ by Lemma \ref{proportiesFDSC} (see Figure \ref{K1mrenkli} (3) for one of the two possible roles of $v_{d+1} $ in $ F_j $). 
				\end{itemize}	
				Thus, $ F_i\cap G_1=\emptyset $ and $ F_i\cap G_2\neq \emptyset $ in both cases, this contradicts that $ |\mathcal{F}^1|= |\mathcal{F}|=\frac{d}{2}$. 				
				\end{itemize}

			\item [$ (b) $] Exactly one of $u$ and $v$ is in $C$. 
			
			We assume without loss of generality that $u\in C$. Since $ |V(C)|>1 $, there exists another vertex $ u' $ in $ C $ which is adjacent to $ u $. If there exists a vertex $ w $ in $ C $ such that $ w_{d+1}$ is not a vertex of $\mathcal{F}$, then $ C $ is connected to $ FDSC_n-G_1-\mathcal{F} $ with the fault-free edge $ (w,w_{d+1}) $. Thus, $ u_{d+1} $ and $ u'_{d+1} $ are both in $ V(\mathcal{F}) $. Moreover, by Lemma \ref{proportiesFDSC}, $ u_{d+1} $ and $ u'_{d+1} $ are in different modules, say $ G_2 $ and $ G_3 $, and different faulty subsets, say $ F_i $ and $ F_j $, respectively. 
			If we consider the cases where $ u'_{d+1} $ is the center of $ F_j $ (see Figure \ref{K1mrenkli} (4) for one of the two possible roles of $u_{d+1}$ in $ F_i $) and $ u'_{d+1} $ is a leaf of $ F_j $ (see Figure \ref{K1mrenkli} (5) for one of the two possible roles of $u_{d+1}$ in $ F_i $) separately, then we see in both cases that no vertex of $ F_j $ is in $ G_1 $ by Lemma \ref{proportiesFDSC}. In the former, any neighbor of $ u'_{d+1} $ cannot have an external neighbor in $ G_1 $, while in the latter, any neighbor of the center of $ F_j $, except $ u'_{d+1} $,  cannot have an external neighbor in $ G_1 $ by Lemma \ref{proportiesFDSC}. 
			Thus, $ F_j\cap G_1=\emptyset $ and $ F_j\cap G_3\neq\emptyset $ in both cases, this contradicts that $ |\mathcal{F}^1|= |\mathcal{F}|=\frac{d}{2}$.

			\item [$ (c) $]  Neither $ u$  nor $ v$  is in $ C$.
			
			There exist two adjacent vertices $ u'$ and $ v'$ in $ C $ different from $ u=B_1B_1 $ and $ v= \overline{B_1}B_1 $. If there exists a vertex $ w $ in $ C $ such that $ w_{d+1}$ is not a vertex of $\mathcal{F}$, then $ C $ is connected to $ FDSC_n-G_1-\mathcal{F} $ with the fault-free edge $ (w,w_{d+1}) $. Thus, $ u'_{d+1} $ and $ v'_{d+1} $ are  both in $ V(\mathcal{F}) $. Moreover, by Lemma \ref{proportiesFDSC}, $ u'_{d+1} $ and $ v'_{d+1} $ are in different modules, say $G_3$ and $G_4$, and different faulty subsets, say $ F_i $ and $ F_j $, respectively. Note that $ u'_{d+1} $ is either the center or a leaf of $ F_i $, whereas $ v'_{d+1} $  is either the center or a leaf of $ F_j $.  In each of these four possibilities, no vertices of $ F_i $ and $ F_j $ are in $ G_1 $, by Lemma \ref{proportiesFDSC}. Moreover, except $ u'_{d+1} $ and $ v'_{d+1} $, any neighbor of the centers of $ F_i $ and $F_j$  cannot have an external neighbor in $ G_1 $ by Lemma \ref{proportiesFDSC} (see Figure \ref{K1mrenkli} (6)) for one of the possible cases depending the roles $ u'_{d+1} $ and $ v'_{d+1} $ in $ F_i $ and $ F_j $, respectively). Thus, $ F_i\cap G_1=\emptyset $ and $ F_j\cap G_1=\emptyset $ in each of the four possibilities, this contradicts that $ |\mathcal{F}^1|= |\mathcal{F}|=\frac{d}{2}$. 
		\end{itemize}	
		Hence, in each of the three cases above, there exists a fault-free edge joining a vertex from $ C $ to a vertex from  $ FDSC_n-G_1-\mathcal{F} $.

		\item [$ (2) $]Let $ |V(C)|=1 $, such that $V(C)=\{w\}$. 
		
		\begin{itemize}
			\item [$ (a) $]  Let  $ w \notin \{u,v\}$. Suppose that $ w_{d+1} $ is in some $ F_i\in \mathcal{F}  $. By Lemma \ref{proportiesFDSC} (1), $ w_{d+1} \notin G_1 $. Let  $ w_{d+1} 
			\in G_j $, where $ j\in T-\{1\} $. Note that there is only one cross edge between $ G_1 $ and $ G_j $. Thus, no vertex of $ F_i $ is in $ G_1 $. That is, $ F_i\cap G_j\neq\emptyset $ and $ F_i\cap G_1=\emptyset $,  this contradicts that $ |\mathcal{F}^1|= |\mathcal{F}|=\frac{d}{2}$. Hence, $ w_{d+1} $ is not in $ V(\mathcal{F}) $. Then the fault-free edge $ (w, w_{d+1}) $ connects $ C $ and $ FDSC_n-G_1-\mathcal{F} $.	
			
			\item [$ (b) $]  Let  $ w \in \{u,v\}$. Assume that $ w=u $ ( resp., $ w=v $). 
			
			If $ u $ and $ v $ are adjacent in $G_1$, then $ v=\overline{B_1}B_1 \in V(\mathcal{F}^1) $ (resp., $ u=B_1B_1 \in V(\mathcal{F}^1)$) since $ |V(C)|=1 $.  If $ w_{d+1} $ is a faulty vertex, considering the neighbors of $u$ (resp., $v$) in $FDSC_\frac{n}{2}$ by the induction hypothesis, we have
			\[|\mathcal{F}|\geq (\lfloor\frac{d-1}{2}\rfloor+1)+1=\frac{d}{2}+1,\]
			where $d$ is even, which contradicts $ |\mathcal{F}|=\frac{d}{2}$. Thus, $ w_{d+1} $ is fault-free vertex, and so the fault-free edge $ (w, w_{d+1}) $ connects $ C $ and $ FDSC_n-G_1-\mathcal{F} $.

			If $ u $ and $ v $ are not adjacent in $G_1$, then $u_{d+1}$ and $v_{d+1}$ are not adjacent. Suppose that $ w_{d+1} $ is in some $ F_i\in \mathcal{F}  $. By Lemma \ref{proportiesFDSC} (1), $ w_{d+1} \notin G_1 $. Let  $ w_{d+1} 
			\in G_j $, where $ j\in T-\{1\} $. By Lemma \ref{proportiesFDSC}, no vertex of $ F_i $ is in $ G_1 $. That is, $ F_i\cap G_j\neq\emptyset $ and $ F_i\cap G_1=\emptyset $,  this contradicts that $ |\mathcal{F}^1|= |\mathcal{F}|=\frac{d}{2}$. Hence, $ w_{d+1} $ is fault-free vertex, and so the fault-free edge $ (w, w_{d+1}) $ connects $ C $ and $ FDSC_n-G_1-\mathcal{F} $.
		\end{itemize}
	\end{itemize}
	Thus, any component $C$ of $G_1$ is connected to $ FDSC_n-G_1-\mathcal{F} $. That is,  $ FDSC_n-\mathcal{F} $ is connected, a contradiction.
\end{proof}

\begin{lemma} \label{upperboundK1m}For $ d\geq1$ and $ n=2^d $,
	$\kappa(FDSC_n;K_{1,m})\leq \lfloor\frac{d}{2}\rfloor+1$ where $ 2\leq m \leq d+1 $.
\end{lemma}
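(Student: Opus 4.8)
The plan is to build, for an arbitrary fixed vertex $u$, an explicit $K_{1,m}$-structure-cut of $FDSC_n$ of size $\lfloor d/2\rfloor+1$ whose deletion isolates $u$. Write $N_{FDSC_n}(u)=\{u_1,u_2,\dots,u_{d+1},u_f\}$. The starting point is the structural fact already contained in the proof of Lemma~\ref{lemma1-K11lowerbound}: the subgraph $FDSC_n[N_{FDSC_n}(u)]$ is one triangle together with $d-1$ isolated vertices. A direct check of the edge rules of Definition~\ref{defFDSCn} identifies the triangle as $\{u_1,u_f,u_d\}$ (for $d\ge 2$): one has $u_1=u\oplus\{1\}$, $u_f=u\oplus\{2\}$, $u_d=u\oplus\{1,2\}$, hence $(u_d)_1=u_f$, $(u_d)_f=u_1$ and $u_1\sim u_f$, so that $\{u,u_1,u_f,u_d\}$ even induces a $K_4$. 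The key point is that one copy of $K_{1,m}$ (a star centred at $u_d$) can absorb these three neighbours of $u$ simultaneously, while every remaining copy will absorb two of the other $d-1$ neighbours, except that a single leftover neighbour — which occurs exactly when $d$ is even — is absorbed by a copy of its own; this accounts for all $d+2$ neighbours of $u$ using $1+\lceil(d-1)/2\rceil=\lfloor d/2\rfloor+1$ copies.

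Concretely: let $F_0$ be the copy of $K_{1,m}$ with centre $u_d$ whose leaves are $u_1$, $u_f$ and any $m-2$ further vertices of $N_{FDSC_n}(u_d)\setminus\{u,u_1,u_f\}$, a set of $d-1$ vertices, so $F_0$ exists because $m\le d+1$. The remaining neighbours $u_2,\dots,u_{d-1},u_{d+1}$ I would partition into $\lfloor(d-1)/2\rfloor$ pairs and (when $d$ is even) one leftover vertex $u_\ell$; for $u_\ell$ take the copy of $K_{1,m}$ centred at $u_\ell$ with its $m$ leaves chosen in $N_{FDSC_n}(u_\ell)\setminus\{u\}$ (possible since $m\le d+1$). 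For each pair $\{u_i,u_j\}$ we need a common neighbour $z\ne u$ of $u_i$ and $u_j$ (the crux, see below); since the two triangle adjacencies already exhaust the edges of $FDSC_n[N_{FDSC_n}(u)]$, such a $z$ cannot itself lie in $N_{FDSC_n}(u)$, so $z\notin N_{FDSC_n}(u)\cup\{u\}$, and we take the copy of $K_{1,m}$ centred at $z$ with $u_i,u_j$ among its leaves and the other $m-2$ leaves in $N_{FDSC_n}(z)\setminus\{u_i,u_j\}$ (hence no leaf equals $u$). Let $\mathcal{F}$ be the resulting family. Then $|\mathcal{F}|=\lfloor d/2\rfloor+1$, every neighbour of $u$ lies in a member of $\mathcal{F}$ while $u$ lies in none, so $u$ is isolated in $FDSC_n-\mathcal{F}$; since $|V(FDSC_n)|=2^n$, the remaining graph is disconnected (or trivial), which gives $\kappa(FDSC_n;K_{1,m})\le\lfloor d/2\rfloor+1$. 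The small cases $d\in\{1,2\}$, where no pair is formed, are handled by the same construction specialised directly.

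The main obstacle is the claim that each pair $\{u_i,u_j\}$ chosen from $\{u_2,\dots,u_{d-1},u_{d+1}\}$ admits a common neighbour other than $u$, i.e.\ that $u_i$ and $u_j$ are joined by a length-two path avoiding $u$. The proof of Lemma~\ref{lemma1-K11lowerbound} already gives that any two neighbours of $u$ have at most one common neighbour besides $u$, so the real work is to choose the pairing so that this bound is actually attained and to exhibit the witness $z$. I expect this to reduce to a case analysis of the ``flip'' versus ``swap'' behaviour of the $e(\tfrac{2n}{2^k})$-edges: for interior dimensions $i<j$ one tracks the bit positions on which $u_i$ and $u_j$ disagree with $u$ (these sit in a nested pair of prefixes) and produces a second length-two connection, while the external neighbour $u_{d+1}$ is treated through the module decomposition together with Lemma~\ref{proportiesFDSC}. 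Everything else — the existence of $F_0$ and of the leftover star, the leaf bookkeeping for $2\le m\le d+1$, and the final disconnectivity count — is routine once a workable pairing has been fixed.
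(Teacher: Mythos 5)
Your strategy is the same as the paper's: isolate one vertex $u$ by covering $N_{FDSC_n}(u)$ with $\lfloor\frac{d}{2}\rfloor+1$ stars, spending one star on the triangle $\{u_1,u_d,u_f\}$ and one star per remaining pair of neighbours that share a common neighbour other than $u$. The structural facts you do establish are correct ($\{u,u_1,u_f,u_d\}$ induces a $K_4$, and $1+\lceil\frac{d-1}{2}\rceil=\lfloor\frac{d}{2}\rfloor+1$). However, the step you yourself call the crux --- that every pair drawn from $\{u_2,\dots,u_{d-1},u_{d+1}\}$ has a common neighbour $z\neq u$ --- is not proved, and for an \emph{arbitrary} vertex $u$ it is false whenever the pair contains the external neighbour $u_{d+1}$. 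Indeed, the only neighbour of $u_{d+1}$ outside its own module, say $G_j$, is $u$ itself, so a common neighbour $z\neq u$ of $u_{d+1}$ and an interior neighbour $u_i$ must lie in $G_j$; being also a neighbour of $u_i$, which lies in the module of $u$, say $G_1$, it must equal $(u_i)_{d+1}$, and by Lemma~\ref{proportiesFDSC}~(3) this forces $\{u,u_i\}=\{B_1B_1,\overline{B_1}B_1\}$. For odd $d$ your count leaves no room for a singleton star ($3+2(\lfloor\frac{d}{2}\rfloor+1-1)=d+2$ exactly), so $u_{d+1}$ must be paired, and the construction cannot be completed unless $u$ is one of these two special vertices.

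That special choice is precisely what the paper makes: it takes $u=\overline{B_1}B_1$, so that $u_2=B_1B_1$ and the witness for the pair $\{u_2,u_{d+1}\}$ is $(u_2)_{d+1}=\overline{B_1}\,\overline{B_1}$, which sits together with $u_{d+1}=B_1\overline{B_1}$ in the module with address $\overline{B_1}$ and is adjacent to it there. The paper also fixes the interior pairing explicitly --- consecutive neighbours $u_{j-1},u_j$ with witness $(u_j)_{j-1}$ --- and for even $d$ covers $u_{d+1}$ by a separate star centred at $(u_{d+1})_1$, which matches your ``leftover'' idea. So your outline is right, but it is missing the two ingredients the argument actually turns on: the specific choice of $u$, without which the external neighbour cannot be paired at all when $d$ is odd, and an explicit, verified pairing of the interior neighbours with exhibited common neighbours rather than an anticipated case analysis.
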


\begin{proof}
	The proof is trivial for $d=1$. Let $d\geq 2$. It is enough to construct a $K_{1,m}$-structure-cut of cardinality $\lfloor\frac{d}{2}\rfloor+1$ to complete the proof.  Let $u=\overline{B_1}B_1$ and $u_2=B_1B_1$, where $B_1$ is the address of the module $G_1$. We consider the following two cases.
	\begin{figure}[h]
		\centering
		
		\includegraphics[width=1.005\textwidth]{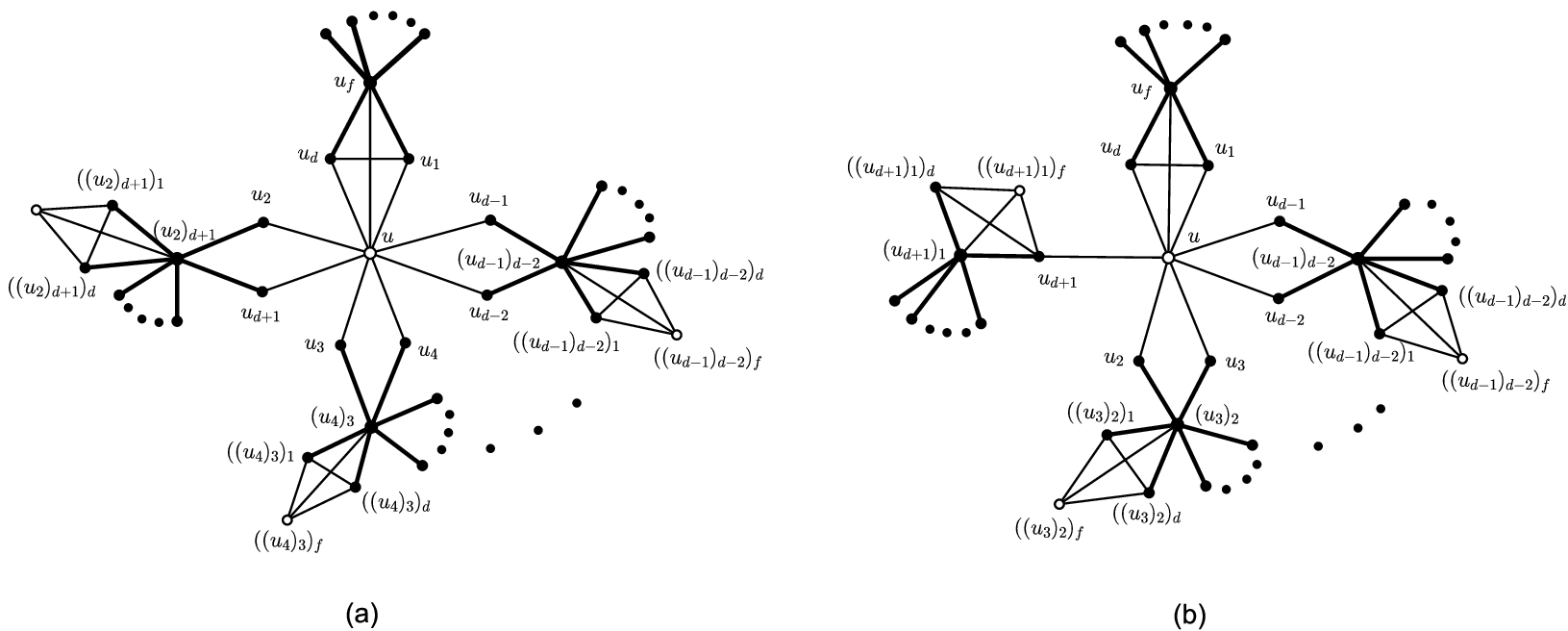}
		\caption{Explanation of Lemma \ref{upperboundK1m} when (a) $d$ is odd, (b) $d$ is even}
			\label{K1mupperbound}
	\end{figure}
	
	Let $d$ be odd. Let $F_1$ be a $K_{1,m}$ with the center $(u_2)_{d+1}$ and the leaf set which is obtained by the union of $\{u_2, u_{d+1}\}$ and the set of any $m-2$ vertices from $N_{FDSC_n}((u_2)_{d+1})-\{u_2, u_{d+1}, ((u_2)_{d+1})_f\}$. Likewise, let $F_{\frac{j}{2}}$ be a $K_{1,m}$ with the center $(u_j)_{j-1}$ and the leaf set which is obtained by the union of $\{ u_j, u_{j-1}\}$ and the set of any $m-2$ vertices from $N_{FDSC_n}((u_j)_{j-1})-\{u_j, u_{j-1}, ((u_j)_{j-1})_f\}$, for each even $j \in \{4, \dots, d-1\}$. Let $F_{\frac{d-1}{2}+1}$ be the $K_{1,m}$ with the center $u_f$ and the leaf set $\{u_1,u_d\}\cup \{(u_f)_j~\vert ~j\in[m+1]-\{1,d,f\}\}$ (see Figure \ref{K1mupperbound} (a)). If we consider  $\mathcal{F}=\{F_1,\ldots,F_{\frac{d-1}{2}+1}\}$, then
	\[|\mathcal{F}|=\frac{d-1}{2}+1=\lfloor\frac{d}{2}\rfloor+1.\]
	
	Let $d$ be even.  Let $F_{\lfloor\frac{j}{2}\rfloor}$ be a $K_{1,m}$ with the center $(u_j)_{j-1}$ and the leaf set which is obtained by the union of  $\{ u_j, u_{j-1}\}$ and the set of any $m-2$ vertices from $N_{FDSC_n}((u_j)_{j-1})-\{u_j, u_{j-1},((u_j)_{j-1})_f\}$ for each odd $j \in \{3, \dots, d-1\}$. Let $F_{\lfloor\frac{d-1}{2}\rfloor+1}$ be a $K_{1,m}$ with the center $(u_{d+1})_1$ and the leaf set which is obtained by the union of $\{u_{d+1}, ((u_{d+1})_1)_d\}$ and the set of any $m-2$ vertices from $N_{FDSC_n}((u_{d+1})_1)-\{u_{d+1}, (u_{d+1})_1)_d,((u_{d+1})_1)_f\}$. Let $F_{\lfloor\frac{d-1}{2}\rfloor+2}$  be the $K_{1,m}$ with the center $u_f$ and the leaf set $\{u_1,u_d\}\cup \{(u_f)_j~\vert ~j\in[m+1]-\{1,d,f\}\}$ (see Figure \ref{K1mupperbound} (b)).
	If we consider $\mathcal{F}=\{F_1,\ldots,F_{\lfloor\frac{d-1}{2}\rfloor+2}\}$, then
	\[|\mathcal{F}|=\lfloor\frac{d-1}{2}\rfloor+2=\frac{d}{2}+1.\]
	
	In both cases $|\mathcal{F}|=\lfloor\frac{d}{2}\rfloor+1$ and $u$ is an isolated vertex in $FDSC_n-\mathcal{F}$. Thus, $\mathcal{F}$ is a $K_{1,m}$-structure-cut  of $FDSC_n$. That is, $\kappa(FDSC_n;K_{1,m})\leq \lfloor\frac{d}{2}\rfloor+1$ for $d\geq 1$ and $n=2^d$ where $ 2\leq m \leq d+1 $.
\end{proof}

By Lemma \ref{lowerboundK1m}, Lemma \ref{upperboundK1m} and the fact that $\kappa(FDSC_n;K_{1,m})\geq \kappa^s(FDSC_n;K_{1,m})$, we have the following result.

\begin{theorem} \label{K1m} For $ d\geq1 $ and $ n=2^d $,	$\kappa(FDSC_n;K_{1,m})=\kappa^s(FDSC_n;K_{1,m})=\lfloor\frac{d}{2}\rfloor+1$ where $ 2\leq m \leq d+1$.
\end{theorem}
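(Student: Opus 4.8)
The plan is to obtain Theorem~\ref{K1m} as an immediate consequence of the two bounds already established, namely the substructure lower bound of Lemma~\ref{lowerboundK1m} and the structure upper bound of Lemma~\ref{upperboundK1m}, combined with the elementary monotonicity $\kappa(G;\mathcal{H})\geq \kappa^s(G;\mathcal{H})$ recorded at the start of this section. First I would observe that the hypothesis $2\leq m\leq d+1$ of the theorem lies strictly inside the range $2\leq m\leq d+2$ covered by Lemma~\ref{lowerboundK1m}, so the substructure lower bound $\kappa^s(FDSC_n;K_{1,m})\geq \lfloor\frac{d}{2}\rfloor+1$ applies verbatim; and the same range $2\leq m\leq d+1$ is precisely the one for which Lemma~\ref{upperboundK1m} exhibits a $K_{1,m}$-structure-cut of size $\lfloor\frac{d}{2}\rfloor+1$, giving $\kappa(FDSC_n;K_{1,m})\leq \lfloor\frac{d}{2}\rfloor+1$.

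Then I would chain these inequalities into
\[
\Big\lfloor\tfrac{d}{2}\Big\rfloor+1 \;\geq\; \kappa(FDSC_n;K_{1,m}) \;\geq\; \kappa^s(FDSC_n;K_{1,m}) \;\geq\; \Big\lfloor\tfrac{d}{2}\Big\rfloor+1,
\]
which forces equality throughout and yields $\kappa(FDSC_n;K_{1,m})=\kappa^s(FDSC_n;K_{1,m})=\lfloor\frac{d}{2}\rfloor+1$. The case $d=1$ can be checked directly (as already noted inside the proof of Lemma~\ref{upperboundK1m}), so no separate small-$d$ argument is required beyond what the two lemmas already supply; for $d\geq 2$ both lemmas apply as stated and the sandwich closes immediately.

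At this stage there is no genuine obstacle left: the theorem is purely a bookkeeping step, and the substantive difficulty has been absorbed into the two lemmas. In particular, the real work is the lower bound of Lemma~\ref{lowerboundK1m}, proved by induction on $n$ via the decomposition of $FDSC_n$ into the modules $FDSC_{n/2}$; the delicate part there is the case in which some module $G_1$ is internally disconnected by $\mathcal{F}$, which (for even $d$) forces $|\mathcal{F}^1|=|\mathcal{F}|=d/2$ and then demands a careful analysis — using Lemma~\ref{proportiesFDSC} and Lemma~\ref{nocommonneighbor} — of where the external neighbors of the vertices of each component of $G_1-\mathcal{F}^1$ can lie, in order to reach a contradiction. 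The upper bound of Lemma~\ref{upperboundK1m}, by contrast, requires only an explicit construction of a structure-cut isolating the vertex $\overline{B_1}B_1$, with the parities of $d$ handled in separate subcases.
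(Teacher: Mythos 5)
Your proposal is correct and follows exactly the route the paper takes: Theorem~\ref{K1m} is deduced by sandwiching $\kappa^s(FDSC_n;K_{1,m})\geq\lfloor\frac{d}{2}\rfloor+1$ from Lemma~\ref{lowerboundK1m}, $\kappa(FDSC_n;K_{1,m})\leq\lfloor\frac{d}{2}\rfloor+1$ from Lemma~\ref{upperboundK1m}, and the inequality $\kappa(FDSC_n;K_{1,m})\geq\kappa^s(FDSC_n;K_{1,m})$. Your additional remarks on where the real difficulty lies (the lower-bound induction) accurately reflect the structure of the paper's argument.
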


Since $K_{1,m}$ is a connected subgraph of $K_{1,d+2}$ for $m \in \{2,\dots, d+1\}$,  we have $\kappa^s(FDSC_n;K_{1,d+2})\leq\kappa^s(FDSC_n;K_{1,m})$. 
We can fully determine the $K_{1,m}$-substructure connectivity of $FDSC_n$ by combining this inequality with Lemma \ref{lowerboundK1m} and Theorem \ref{K1m}.

\begin{theorem}  \label{kappaSK1m} For $ d\geq1 $ and $ n=2^d $,	$\kappa^s(FDSC_n;K_{1,m})=\lfloor\frac{d}{2}\rfloor+1$ where $ 2\leq m \leq d+2$.
\end{theorem}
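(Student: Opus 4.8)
The plan is to reduce everything to results already in hand, since the only genuinely new case is $m=d+2$: for $2\le m\le d+1$ the claimed value is precisely Theorem \ref{K1m}. So I would split the argument into a lower bound and an upper bound for $\kappa^s(FDSC_n;K_{1,d+2})$ and then glue the two ranges together.

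For the lower bound I would simply invoke Lemma \ref{lowerboundK1m}, which is stated for the full range $2\le m\le d+2$ and already yields $\kappa^s(FDSC_n;K_{1,d+2})\ge \lfloor d/2\rfloor+1$. Nothing further is needed here; the substance of this inequality was absorbed into the induction proof of that lemma, whose delicate part was tracking, via Lemma \ref{proportiesFDSC} and Lemma \ref{nocommonneighbor}, exactly how a small substructure-cut can and cannot straddle two adjacent modules.

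For the upper bound I would use monotonicity of substructure connectivity under the subgraph relation. Since $d\ge 1$ forces $d+2\ge 3$, the star $K_{1,2}$ is a connected subgraph of $K_{1,d+2}$, so every subgraph of $FDSC_n$ isomorphic to a connected subgraph of $K_{1,2}$ is also isomorphic to a connected subgraph of $K_{1,d+2}$. Hence any $K_{1,2}$-substructure-cut of $FDSC_n$ is in particular a $K_{1,d+2}$-substructure-cut, which gives $\kappa^s(FDSC_n;K_{1,d+2})\le \kappa^s(FDSC_n;K_{1,2})$. By Theorem \ref{K1m}, $\kappa^s(FDSC_n;K_{1,2})=\lfloor d/2\rfloor+1$, delivering the matching upper bound.

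Combining the two bounds yields $\kappa^s(FDSC_n;K_{1,d+2})=\lfloor d/2\rfloor+1$, and together with Theorem \ref{K1m} for $2\le m\le d+1$ this covers all $2\le m\le d+2$. I do not expect a real obstacle: both ingredients are already proved, and the only point requiring a line of care is verifying that $K_{1,2}\subseteq K_{1,d+2}$ for every admissible $d$ (which holds since $d\ge 1$), so that Theorem \ref{K1m} may legitimately be applied at $m=2$.
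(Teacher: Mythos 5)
Your proposal is correct and follows essentially the same route as the paper: the lower bound comes from Lemma \ref{lowerboundK1m} (stated up to $m=d+2$), and the upper bound for $m=d+2$ comes from the monotonicity $\kappa^s(FDSC_n;K_{1,d+2})\leq\kappa^s(FDSC_n;K_{1,m})$ for a smaller star combined with Theorem \ref{K1m}. The only cosmetic difference is that you fix $m=2$ in the monotonicity step while the paper quantifies over all $m\in\{2,\dots,d+1\}$.
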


\section{Conclusion}

In this paper, we investigate the $ \mathcal{H} $-structure connectivity and $ \mathcal{H} $-substructure connectivity of a recently introduced hypercube variant, namely folded divide-and-swap cube $ FDSC_n $ for $ \mathcal{H}\in\{K_1, K_{1,1}, K_{1,m} (2\leq m \leq d+1) \} $ where  $ d\geq1 $ and $ n=2^d $. Due to its desirable properties, the folded divide-and-swap cube is a suitable interconnection network for large-scale multi-computer systems. In light of this, it would be of particular interest to conduct further reliability studies on this class. 

\vspace{2cc}
\noindent \textbf{Acknowledgment.} This work was supported by TÜBİTAK (Scientific and Technological Research Council of Türkiye) under the 1002 Project (Grant No. 122F276). 

\vspace{2cc}
\bibliographystyle{abbrv}
\bibliography{references}

\begin{thebibliography}{10}

\bibitem{akers1989group}
S.~B. Akers and B.~Krishnamurthy.
\newblock A group-theoretic model for symmetric interconnection networks.
\newblock {\em IEEE transactions on Computers}, 38(4):555--566, 1989.

\bibitem{ba2023star}
L.~Ba, H.~Wu, and H.~Zhang.
\newblock Star-structure connectivity of folded hypercubes and augmented cubes.
\newblock {\em The Journal of Supercomputing}, 79(3):3257--3276, 2023.

\bibitem{boesch1986synthesis}
F.~T. Boesch.
\newblock Synthesis of reliable networks-a survey.
\newblock {\em IEEE Transactions on Reliability}, 35(3):240--246, 1986.

\bibitem{chang2021constructing}
Y.-H. Chang, K.-J. Pai, C.-C. Hsu, J.-S. Yang, and J.-M. Chang.
\newblock Constructing dual-cists of folded divide-and-swap cubes.
\newblock {\em Theoretical Computer Science}, 856:75--87, 2021.

\bibitem{esfahanian1989generalized}
A.-H. Esfahanian.
\newblock Generalized measures of fault tolerance with application to n-cube
  networks.
\newblock {\em IEEE Transactions on Computers}, 38(11):1586--1591, 1989.

\bibitem{fabrega1994extraconnectivity}
J.~F{\`a}brega and M.~A. Fiol.
\newblock Extraconnectivity of graphs with large girth.
\newblock {\em Discrete Mathematics}, 127(1-3):163--170, 1994.

\bibitem{guo2023connectivity}
L.~Guo and G.~Boruzanl{\i}~Ekinci.
\newblock Connectivity and super connectivity of folded hypercube-like
  networks.
\newblock {\em Theoretical Computer Science}, 976:114151, 2023.

\bibitem{harary1983conditional}
F.~Harary.
\newblock Conditional connectivity.
\newblock {\em Networks}, 13(3):347--357, 1983.

\bibitem{hong2013extra}
W.-S. Hong and S.-Y. Hsieh.
\newblock Extra edge connectivity of hypercube-like networks.
\newblock {\em International Journal of Parallel, Emergent and Distributed
  Systems}, 28(2):123--133, 2013.

\bibitem{kim2019divide}
J.-S. Kim, D.~Kim, K.~Qiu, and H.-O. Lee.
\newblock The divide-and-swap cube: a new hypercube variant with small network
  cost.
\newblock {\em The Journal of Supercomputing}, 75:3621--3639, 2019.

\bibitem{lee2020r3}
C.-W. Lee, S.-Y. Hsieh, and S.-S. Yang.
\newblock R3-connectivity of folded hypercubes.
\newblock {\em Discrete Applied Mathematics}, 285:261--273, 2020.

\bibitem{li2018structure}
C.~Li, S.~Lin, and S.~Li.
\newblock Structure connectivity and substructure connectivity of (n, k)-star
  graph networks.
\newblock In {\em 2018 15th International Symposium on Pervasive Systems,
  Algorithms and Networks (I-SPAN)}, pages 240--246. IEEE, 2018.

\bibitem{li2019structure}
D.~Li, X.~Hu, and H.~Liu.
\newblock Structure connectivity and substructure connectivity of twisted
  hypercubes.
\newblock {\em Theoretical Computer Science}, 796:169--179, 2019.

\bibitem{lin2016structure}
C.-K. Lin, L.~Zhang, J.~Fan, and D.~Wang.
\newblock Structure connectivity and substructure connectivity of hypercubes.
\newblock {\em Theoretical Computer Science}, 634:97--107, 2016.

\bibitem{lin2015extra}
L.~Lin, L.~Xu, S.~Zhou, and S.-Y. Hsieh.
\newblock The extra, restricted connectivity and conditional diagnosability of
  split-star networks.
\newblock {\em IEEE Transactions on Parallel and Distributed Systems},
  27(2):533--545, 2015.

\bibitem{lu2020structure}
H.~L{\"u} and T.~Wu.
\newblock Structure and substructure connectivity of balanced hypercubes.
\newblock {\em Bulletin of the Malaysian Mathematical Sciences Society},
  43(3):2659--2672, 2020.

\bibitem{lv2018structure}
Y.~Lv, J.~Fan, D.~F. Hsu, and C.-K. Lin.
\newblock Structure connectivity and substructure connectivity of k-ary n-cube
  networks.
\newblock {\em Information Sciences}, 433:115--124, 2018.

\bibitem{ning2020connectivity}
W.~Ning.
\newblock Connectivity and super connectivity of the divide-and-swap cube.
\newblock {\em Theoretical Computer Science}, 842:1--5, 2020.

\bibitem{pan2020structure}
Z.~Pan and D.~Cheng.
\newblock Structure connectivity and substructure connectivity of the crossed
  cube.
\newblock {\em Theoretical Computer Science}, 824:67--80, 2020.

\bibitem{sabir2018structure}
E.~Sabir and J.~Meng.
\newblock Structure fault tolerance of hypercubes and folded hypercubes.
\newblock {\em Theoretical Computer Science}, 711:44--55, 2018.

\bibitem{wang2023structure}
Y.~Wang and J.~Meng.
\newblock Structure fault-tolerance of divide-and-swap k-ary n-cube.
\newblock {\em Theoretical Computer Science}, 947:113704, 2023.

\bibitem{xue2023gen4conn}
C.~Xue, S.~Zhou, and H.~Zhang.
\newblock The bounds of generalized 4-connectivity of folded divide-and-swap
  cubes.
\newblock {\em Journal of Interconnection Networks}, page 2350016, 2023.

\bibitem{you2023superspanning}
L.~You, J.~Jiang, and Y.~Han.
\newblock Super spanning connectivity of the folded divide-and-swapcube.
\newblock {\em Mathematics}, 11(11):2581, 2023.

\bibitem{zhao2023connectivity}
S.-L. Zhao and J.-M. Chang.
\newblock Connectivity, super connectivity and generalized 3-connectivity of
  folded divide-and-swap cubes.
\newblock {\em Information Processing Letters}, 182:106377, 2023.

\bibitem{zhao2023reliabilityDSC}
S.-L. Zhao and J.-M. Chang.
\newblock Reliability assessment of the divide-and-swap cube in terms of
  generalized connectivity.
\newblock {\em Theoretical Computer Science}, 943:1--15, 2023.

\bibitem{zhou2017g}
J.-X. Zhou.
\newblock On g-extra connectivity of hypercube-like networks.
\newblock {\em Journal of Computer and System Sciences}, 88:208--219, 2017.

\bibitem{zhou2021structureDSC}
Q.~Zhou, S.~Zhou, J.~Liu, and X.~Liu.
\newblock Structure and substructure connectivity of divide-and-swap cube.
\newblock {\em Theoretical Computer Science}, 880:20--36, 2021.

\bibitem{zhou2022reliabilityDSC}
Q.~Zhou, S.~Zhou, X.~Liu, and Z.~Yu.
\newblock Reliability of divide-and-swap cube based on r-component connectivity
  and diagnosability.
\newblock {\em Journal of Interconnection Networks}, 22(01):2142021, 2022.

\end{thebibliography}

\end{document}